\newtheorem{theorem}{Theorem}[section]
\newtheorem{corollary}[theorem]{Corollary}
\newtheorem{lemma}[theorem]{Lemma}
\newtheorem{proposition}[theorem]{Proposition}
\newtheorem{Definition}[theorem]{Definition}
\newtheorem{Example}[theorem]{Example}
\newtheorem{Remark}[theorem]{Remark}
\newenvironment{remark}{\begin{Remark}\begin{em}}{\end{em}\end{Remark}}
\newenvironment{definition}{\begin{Definition}\begin{em}}{\end{em}\end{Definition}}
\DeclareMathOperator{\tr}{tr}
\address{Sejong Kim, Department of Mathematics, Chungbuk National University, Cheongju 28644, Korea}
\email{skim@chungbuk.ac.kr}
\address{Vatsalkumar N. Mer \\ Institute for Industrial and Applied Mathematics, Chungbuk National University, Cheongju 28644, Korea}
\email{vnm232657@gmail.com}
\begin{document}

\author{ Sejong Kim and Vatsalkumar N. Mer}

\title[New multivariable mean from nonlinear matrix equation]{New multivariable mean from nonlinear matrix equation associated to the harmonic mean}

\date{\today}
\maketitle

\begin{abstract}
Various multivariable means have been defined for positive definite matrices, such as the Cartan mean, Wasserstein mean, and R\'{e}nyi power mean. These multivariable means have corresponding matrix equations. In this paper, we consider the following non-linear matrix equation:
\begin{displaymath}
X = \left[ \sum_{i=1}^{n} w_{i} [ (1-t) X + t A_{i} ]^{-1} \right]^{-1},
\end{displaymath}
where $t \in (0,1]$. We prove that this equation has a unique solution and define a new mean, which we denote as $G_{t}(\omega; \mathbb{A})$.
We explore important properties of the mean $G_{t}(\omega; \mathbb{A})$ including the relationship with matrix power mean, and show that the mean $G_{t}(\omega; \mathbb{A})$ is monotone in the parameter $t$.
Finally, we connect the mean $G_{t}(\omega; \mathbb{A})$ to a barycenter for the log-determinant divergence.

\vspace{5mm}

\noindent {\bf Mathematics Subject Classification} (2020): 15A24, 15B48

\noindent {\bf Keywords}: Thompson metric, Ando-Li-Mathias's property, monotonicity, quantum divergence
\end{abstract}

\section{Introduction}

Several various types of multivariable means are found on the open convex cone $\mathbb{P}_{m}$ consisting of positive definite matrices such as the Cartan mean, Wasserstein mean, and R\'{e}nyi power mean. Correspondingly, each of these means is associated with specific matrix equations. For instance, the Cartan mean $\Lambda$ is defined as a unique minimizer of the weighted sum of squares of Riemannian trace distances to each point: for any $\mathbb{A} = (A_{1}, \ldots, A_{n}) \in \mathbb{P}_{m}^{n}$ and a positive probability vector $\omega = (w_{1}, \dots, w_{n})$
\begin{displaymath}
\Lambda(\omega; \mathbb{A}) = \underset{X \in \mathbb{P}_{m}}{\arg \min} \, \sum_{i=1}^{n} w_{i} d_{R}^{2}(X, A_{i}),
\end{displaymath}
where $d_{R}(A, B) = \Vert \log (A^{-1/2} B A^{-1/2}) \Vert_{2}$ denotes the Riemannian trace distance between $A$ and $B$.
We obtain from \cite{Ka} that the Cartan mean coincides with the unique positive definite solution $X$ of the following matrix equation, called the Karcher equation,
\begin{displaymath}
\sum_{i=1}^{n} w_{i} \log (X^{-1/2} A_{i} X^{-1/2}) = 0.
\end{displaymath}
An interesting approach of the Cartan mean is the power mean $P_{t} (\omega; \mathbb{A})$, defined as the unique solution $X \in \mathbb{P}_{m}$ of the nonlinear matrix equation
\begin{displaymath}
\displaystyle X = \sum_{i=1}^{n} w_{i} X \#_{t} A_{i}
\end{displaymath}
for $t \in (0,1]$. We define $P_{t} (\omega; \mathbb{A}) = P_{-t}(\omega; \mathbb{A}^{-1})^{-1}$ for $t \in [0,1)$. It has been shown in \cite{LL14, LP} that the power mean $P_{t} (\omega; \mathbb{A})$ converges to the Cartan mean $\Lambda(\omega; \mathbb{A})$ as $t \to 0$, and furthermore
\begin{equation} \label{E:P-mono}
P_{-1} = \mathcal{H} \leq \cdots \leq P_{-t} \leq P_{-s} \leq \cdots \leq \Lambda = P_{0} \leq \cdots \leq P_{s} \leq P_{t} \leq \cdots \leq P_{1} = \mathcal{A}
\end{equation}
for $0 < s \leq t \leq 1$, where $\mathcal{A}$ and $\mathcal{H}$ denote the weighted arithmetic and harmonic means, respectively.

The Wasserstein mean $\Omega$ is defined as the least squares mean for the Bures-Wasserstein metric: for any $\mathbb{A} = (A_{1}, \ldots, A_{n}) \in \mathbb{P}_{m}^{n}$ and $\omega = (w_{1}, \dots, w_{n}) \in \Delta_{n}$
\begin{displaymath}
\Omega(\omega; \mathbb{A}) = \underset{X \in \mathbb{P}_{m}}{\arg \min} \, \sum_{i=1}^{n} w_{i} d_{W}^{2}(X, A_{i}),
\end{displaymath}
where $d_{W}(A, B) = \left[ \tr (A + B - 2(A^{1/2} B A^{1/2})^{1/2}) \right]^{1/2}$ denotes the Bures-Wasserstein distance between $A$ and $B$.
Note from \cite{ABCM, BJL-1} that the Wasserstein mean coincides with the unique positive definite solution $X$ of the following matrix equation
\begin{equation} \label{E:Wasserstein}
X = \sum_{i=1}^{n} w_{i} (X^{1/2} A_{i} X^{1/2})^{1/2}.
\end{equation}
As a generalization of \eqref{E:Wasserstein}, the R\'{e}nyi power mean $\mathcal{R}_{t,z}$ for $0 < t \leq z < 1$ is defined in \cite{DF} as a unique positive definite solution of the matrix equation
\begin{displaymath}
X = \sum_{j=1}^{n} w_{j} \left( A_{j}^{\frac{1-t}{2z}} X^{\frac{t}{z}} A_{j}^{\frac{1-t}{2z}} \right)^{z}.
\end{displaymath}
where $Q_{t, z}(A, B) := \left( A^{\frac{1-t}{2z}} B^{\frac{t}{z}} A^{\frac{1-t}{2z}} \right)^{z}$ is the matrix version of the $t$-$z$ R\'{e}nyi relative entropy \cite{AD, MO}. One can see that $\mathcal{R}_{t,z} = \Omega$ when $t = z = 1/2$. See \cite{BJL-2, JK} for more properties about Wasserstein mean and R\'{e}nyi power mean.

As we have seen previously, multivariable means correspond to their own matrix equations. In this paper we consider the following nonlinear matrix equation
\begin{displaymath}
X = \left[ \sum_{i=1}^{n} w_{i} [ (1-t) X + t A_{i} ]^{-1} \right]^{-1},
\end{displaymath}
where $t \in (0,1]$. We obtain new multivariable mean by proving that the right-hand side of the above equation is a strict contraction for the Thompson metric. In Section 3, we explore the fundamental properties of the mean from in the view point of Ando-Li-Mathias's multivariable geometric mean and and we establish bounds in terms of the operator norm. One of the interesting results, appeared in Section 4, is the monotonicity for parameter $t$, which means that new multivariable mean interpolates the weighted arithmetic and harmonic means. Furthermore, we prove the relationship between $G_{t}$ and the matrix power mean $P_{t}$. Finally, we see that new multivariable mean is connected to the right mean for quantum divergence when $\omega$ is a uniformly-distributed probability vector. From the fact that new multivariable mean for $n = 2$ coincides with the metric geometric mean when $t = 1/2$ and $\omega = (1/2, 1/2)$, we give some open questions.

\section{Thompson metric}


The Thompson metric on $\mathbb{P}_{m}$ is defined by
$d(A, B) = \Vert \log(A^{-1/2} B A^{-1/2}) \Vert$. It is known that $d$ is a complete metric on $\mathbb{P}_{m}$ and that
\begin{equation} \label{E:Thompson}
d(A,B) = \max \{ \log M(B/A), \log M(A/B) \},
\end{equation}
where $M(B/A) = \inf \{\alpha > 0: B \leq \alpha A\} = \lambda_{1} (A^{-1/2} B A^{-1/2})$: see \cite{CPR, Nu1, Th}.
The geometric mean curve
\begin{equation} \label{E:geomean}
\displaystyle [0,1] \ni t \mapsto A \#_{t} B := A^{1/2} (A^{-1/2} B A^{-1/2})^{t} A^{1/2}.
\end{equation}
is a minimal geodesic from $A$ to $B$ for the Thompson metric. We call $A \#_{t} B$ the \emph{weighted geometric mean} of $A$ and $B$ on $\mathbb{P}_{m}$, and simply write $A \# B := A \#_{1/2} B$.

\begin{lemma} \cite{CPR,LL} \label{L:Thompson}
Basic properties of the Thompson metric on $\mathbb{P}_{m}$ include
\begin{itemize}
\item[(1)] $d(A, B) = d(A^{-1}, B^{-1}) = d(S A S^{*}, S B S^{*})$
for any $S \in \mathrm{GL};$
\item[(2)] $d(A \#_{t} B, C \#_{t} D) \leq (1-t) d(A,C) + t d(B,D)$ for any $t \in [0,1];$
\item[(3)] $d(A \#_{s} B, A \#_{t} B) = |s-t| d(A, B)$ for any $s, t \in [0,1].$
\end{itemize}
\end{lemma}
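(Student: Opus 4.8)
The plan is to verify the three items directly from the characterization \eqref{E:Thompson}, $d(A,B)=\max\{\log M(B/A),\log M(A/B)\}$ with $M(B/A)=\inf\{\alpha>0:B\le\alpha A\}$, together with elementary order properties on $\mathbb{P}_{m}$ and the basic calculus of the weighted geometric mean $\#_{t}$. For (1), congruence invariance is immediate: for $S\in\mathrm{GL}$ one has $B\le\alpha A\iff SBS^{*}\le\alpha SAS^{*}$, hence $M(SBS^{*}/SAS^{*})=M(B/A)$ and, symmetrically, $M(SAS^{*}/SBS^{*})=M(A/B)$, so \eqref{E:Thompson} gives $d(SAS^{*},SBS^{*})=d(A,B)$. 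For the inversion identity I would use that $X\mapsto X^{-1}$ reverses the Loewner order, so $B\le\alpha A\iff A^{-1}\le\alpha B^{-1}$; this gives $M(A^{-1}/B^{-1})=M(B/A)$ and $M(B^{-1}/A^{-1})=M(A/B)$, whence $d(A^{-1},B^{-1})=d(A,B)$ after interchanging the two terms in the maximum. Equivalently, $(A^{-1})^{-1/2}B^{-1}(A^{-1})^{-1/2}=(A^{-1/2}BA^{-1/2})^{-1}$, and $\log$ negates under inversion while the operator norm is unchanged.

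For (2), the inputs I would use are that $\#_{t}$ is jointly monotone for the Loewner order, i.e. $A_{1}\le A_{2}$ and $B_{1}\le B_{2}$ imply $A_{1}\#_{t}B_{1}\le A_{2}\#_{t}B_{2}$, and positively homogeneous, $(\alpha A)\#_{t}(\beta B)=\alpha^{1-t}\beta^{t}(A\#_{t}B)$ for $\alpha,\beta>0$; the homogeneity is immediate from \eqref{E:geomean}, and the monotonicity is the standard consequence of operator monotonicity of $r\mapsto r^{t}$ on $[0,1]$. Set $p=d(A,C)$ and $q=d(B,D)$. By \eqref{E:Thompson} we have $C\le e^{p}A$, $A\le e^{p}C$, $D\le e^{q}B$ and $B\le e^{q}D$. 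Combining monotonicity with homogeneity, $C\#_{t}D\le(e^{p}A)\#_{t}(e^{q}B)=e^{(1-t)p+tq}(A\#_{t}B)$, so $M\bigl((C\#_{t}D)/(A\#_{t}B)\bigr)\le e^{(1-t)p+tq}$, and symmetrically $M\bigl((A\#_{t}B)/(C\#_{t}D)\bigr)\le e^{(1-t)p+tq}$; taking the larger of the two logarithms yields $d(A\#_{t}B,C\#_{t}D)\le(1-t)d(A,C)+t\,d(B,D)$.

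For (3), I would apply (1) with $S=A^{-1/2}$ and write $X:=A^{-1/2}BA^{-1/2}\in\mathbb{P}_{m}$, so that $d(A\#_{s}B,A\#_{t}B)=d(X^{s},X^{t})$. Since the powers of $X$ commute, $(X^{s})^{-1/2}X^{t}(X^{s})^{-1/2}=X^{t-s}$, hence $d(X^{s},X^{t})=\Vert\log X^{t-s}\Vert=|s-t|\,\Vert\log X\Vert=|s-t|\,d(A,B)$, using $d(A,B)=\Vert\log(A^{-1/2}BA^{-1/2})\Vert=\Vert\log X\Vert$. I do not expect any real obstacle: the lemma is classical and, granting \eqref{E:Thompson} and the homogeneity/monotonicity of $\#_{t}$, each part reduces to a short computation; the only point needing a little care is the inversion case in (1), where one must recall that inversion reverses the Loewner order.
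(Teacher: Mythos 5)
Your argument is correct, and it is worth noting that the paper itself offers no proof of this lemma at all: it is stated as a citation to \cite{CPR,LL}, so there is nothing internal to compare against. Your derivation is the standard one — congruence and inversion invariance read off directly from the characterization $d(A,B)=\max\{\log M(B/A),\log M(A/B)\}$ via order preservation/reversal, item (2) from the joint monotonicity and two-sided homogeneity $(\alpha A)\#_{t}(\beta B)=\alpha^{1-t}\beta^{t}(A\#_{t}B)$ applied to $C\le e^{p}A$, $D\le e^{q}B$, and item (3) by reducing to commuting powers of $X=A^{-1/2}BA^{-1/2}$. The only step you wave at is the joint monotonicity of $\#_{t}$ in its \emph{first} argument; operator monotonicity of $r\mapsto r^{t}$ gives monotonicity in the second argument directly, and the first follows from the symmetry $A\#_{t}B=B\#_{1-t}A$, so the gap is cosmetic rather than substantive.
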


By Lemma \ref{L:Thompson} the geometric mean satisfies the following convexity for $d$:
\begin{equation} \label{E:continuity}
d(A \#_{s} B, C \#_{t} D) \leq (1-t) \, d(A, C) + t \, d(B, D) + |s-t| \, d(C, D)
\end{equation}
for $A, B, C, D \in \mathbb{P}_{m}$ and $s, t \in [0,1]$.

\begin{lemma} \label{L:nonexpansive}
The Thompson metric satisfies $d(A + B, C + D) \leq \max \{ d(A, C), d(B, D) \}$ for any $A, B, C, D \in \mathbb{P}_{m}$. So inductively we have for any $A_{i}, B_{i} \in \mathbb{P}_{m}, \ 1 \leq i \leq n$
\begin{displaymath}
d \left( \sum_{i=1}^{n} A_{i}, \sum_{i=1}^{n} B_{i} \right) \leq \underset{1 \leq i \leq n}{\max} \{ d(A_{i}, B_{i}) \}.
\end{displaymath}
\end{lemma}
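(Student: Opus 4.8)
The plan is to reduce the statement to the two-variable case $d(A+B, C+D) \leq \max\{d(A,C), d(B,D)\}$, since the $n$-variable inequality then follows by an immediate induction: writing $\sum_{i=1}^{n} A_i = (\sum_{i=1}^{n-1} A_i) + A_n$, one applies the two-variable inequality and the induction hypothesis, and the maximum of maxima is the overall maximum. So the whole content of the lemma is in the two-summand estimate.

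For the two-variable case I would work from the characterization \eqref{E:Thompson}, namely $d(X,Y) = \max\{\log M(Y/X), \log M(X/Y)\}$ with $M(Y/X) = \inf\{\alpha > 0 : Y \leq \alpha X\}$. Set $\alpha = M(C/A)$ and $\beta = M(D/B)$, so that $C \leq \alpha A$ and $D \leq \beta B$. Then $C + D \leq \alpha A + \beta B \leq \max\{\alpha, \beta\}(A + B)$, which gives $M((C+D)/(A+B)) \leq \max\{\alpha, \beta\} = \max\{M(C/A), M(D/B)\}$. Taking logarithms yields $\log M((C+D)/(A+B)) \leq \max\{\log M(C/A), \log M(D/B)\} \leq \max\{d(A,C), d(D,B)\} = \max\{d(A,C), d(B,D)\}$. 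By the symmetry of the roles of the two pairs — that is, running the same argument with $A \leftrightarrow C$ and $B \leftrightarrow D$ — we also get $\log M((A+B)/(C+D)) \leq \max\{d(A,C), d(B,D)\}$. Taking the maximum of the two bounds and invoking \eqref{E:Thompson} once more gives $d(A+B, C+D) \leq \max\{d(A,C), d(B,D)\}$.

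I do not anticipate a serious obstacle here; the only point requiring a little care is the monotonicity step $C \leq \alpha A,\ D \leq \beta B \implies C + D \leq \max\{\alpha,\beta\}(A+B)$, which uses that $\alpha A \leq \max\{\alpha,\beta\} A$ and $\beta B \leq \max\{\alpha,\beta\} B$ (valid since $A, B \geq 0$ and the scalars are positive) together with additivity of the Loewner order. One should also note that the infimum defining $M(C/A)$ is attained (it equals $\lambda_1(A^{-1/2} C A^{-1/2})$), so the inequalities $C \leq \alpha A$ and $D \leq \beta B$ hold with the exact values $\alpha = M(C/A)$, $\beta = M(D/B)$ rather than only in a limiting sense. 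With that observed, the argument is complete, and the inductive extension to $n$ summands is routine.
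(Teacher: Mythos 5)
Your proof is correct. The paper actually states Lemma \ref{L:nonexpansive} without proof (it is quoted as a known fact about the Thompson metric), so there is no in-paper argument to compare against; your derivation via $M(C/A)=\lambda_{1}(A^{-1/2}CA^{-1/2})$, the Loewner-order estimate $C+D\leq\max\{\alpha,\beta\}(A+B)$, the symmetric bound for $M\bigl((A+B)/(C+D)\bigr)$, and the routine induction is exactly the standard proof from the literature, and every step checks out.
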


The following is the additive contraction principle on $\mathbb{P}_{m}$ in \cite{LeeL}:
\begin{lemma} \label{L:contraction}
Let $A \in \mathbb{P}_{m}$. Then
\begin{displaymath}
d(A + X, A + Y) \leq \frac{\alpha}{\alpha + \beta} d(X, Y)
\end{displaymath}
for any $X, Y \in \mathbb{P}_{m}$, where $\alpha = \max \{ \lambda_{1}(X), \lambda_{1}(Y) \}$ and $\beta = \lambda_{m}(A)$.
\end{lemma}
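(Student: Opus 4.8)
The lemma is the operator incarnation of an elementary one-dimensional estimate: on $(0,\infty)$, where $d(x,y)=|\log x-\log y|$, the function $u\mapsto\log(a+e^{u})$ has derivative $\frac{e^{u}}{a+e^{u}}$, which is monotone in $e^{u}$, so on the segment joining $\log x$ and $\log y$ its slope never exceeds $\frac{\alpha}{\alpha+\beta}$ with $\alpha=\max\{x,y\}$ and $\beta=a$; the mean value theorem then gives the claim. The plan is to emulate this in $\mathbb{P}_{m}$ by first proving a single-step order estimate and then iterating it along a Thompson geodesic.

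First I would establish the following sublemma: \emph{if $X,Y\le\alpha I$ and $\beta=\lambda_{m}(A)$, then $d(A+X,A+Y)\le\log\frac{\alpha\,e^{d(X,Y)}+\beta}{\alpha+\beta}$.} Set $r=e^{d(X,Y)}\ge 1$; by \eqref{E:Thompson} we then have $X\le rY$ and $Y\le rX$. Put $s=\frac{\alpha r+\beta}{\alpha+\beta}$, so that $1\le s\le r$, and compute
\[ s(A+Y)-(A+X)=(s-1)A+(sY-X)\ \ge\ (s-1)\beta I+(s-r)Y\ \ge\ \big[(s-1)\beta+(s-r)\alpha\big]I\ =\ 0, \]
using $A\ge\beta I$, $X\le rY$, $1\le s\le r$ and $Y\le\alpha I$. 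Hence $A+X\le s(A+Y)$; since the hypotheses are symmetric in $X,Y$ we also get $A+Y\le s(A+X)$, and \eqref{E:Thompson} yields $d(A+X,A+Y)\le\log s$.

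The sublemma by itself is not enough, and this is the crux of the matter: weighted AM--GM applied to $r$ and $1$ gives $\log\frac{\alpha r+\beta}{\alpha+\beta}\ge\frac{\alpha}{\alpha+\beta}\log r$, so the single-step bound is strictly weaker than the target $\frac{\alpha}{\alpha+\beta}\,d(X,Y)$. To recover the sharp constant I would subdivide the Thompson geodesic $\gamma(t)=X\#_{t}Y$: for $t_{k}=k/N$, the triangle inequality gives $d(A+X,A+Y)\le\sum_{k=0}^{N-1}d\big(A+\gamma(t_{k}),A+\gamma(t_{k+1})\big)$, and to each summand I apply the sublemma, using monotonicity of the weighted geometric mean together with $X,Y\le\alpha I$ to get $\gamma(t_{k}),\gamma(t_{k+1})\le\alpha I$, and Lemma~\ref{L:Thompson}(3) to get $d(\gamma(t_{k}),\gamma(t_{k+1}))=\frac{1}{N}d(X,Y)$. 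This gives $d(A+X,A+Y)\le N\log\frac{\alpha\,e^{d(X,Y)/N}+\beta}{\alpha+\beta}$ for every $N$, and letting $N\to\infty$ (with $e^{\delta}-1=\delta+o(\delta)$) the right-hand side tends to $\frac{\alpha}{\alpha+\beta}d(X,Y)$, which is the lemma.

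Alternatively --- this is really the same computation carried out infinitesimally --- one may use that the Thompson metric is the length metric of the Finsler structure $\|V\|_{W}=\|W^{-1/2}VW^{-1/2}\|$ and bound the length of the curve $t\mapsto A+X\#_{t}Y$. Writing $C=X\#_{t}Y$, one has $\|(A+C)^{-1/2}\dot{C}(A+C)^{-1/2}\|\le\|(A+C)^{-1/2}C^{1/2}\|^{2}\,\|C^{-1/2}\dot{C}C^{-1/2}\|$, where the last factor is the constant geodesic speed $d(X,Y)$ and $\|(A+C)^{-1/2}C^{1/2}\|^{2}=\lambda_{1}\big((I+C^{-1/2}AC^{-1/2})^{-1}\big)\le\big(1+\beta/\lambda_{1}(C)\big)^{-1}\le\frac{\alpha}{\alpha+\beta}$, using $A\ge\beta I$ and $\lambda_{1}(X\#_{t}Y)\le\alpha$; integrating over $[0,1]$ closes the argument. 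In either route the only steps demanding care are the monotonicity input $\lambda_{1}(X\#_{t}Y)\le\alpha$ and the passage to the limit; everything else is bookkeeping.
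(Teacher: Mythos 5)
Your proof is correct, and it takes a genuinely different route from the paper's. The paper proves the lemma in one shot: it invokes a scalar logarithmic inequality $\log\frac{a+\gamma}{b+\gamma}\le\frac{\delta}{\delta+\gamma}\log^{+}\frac{a}{b}$ (itself a mean--value--theorem estimate), feeds it into the Courant--Fischer max--min representation $\lambda_{1}(Y^{-1/2}XY^{-1/2})=\max_{S}\min_{0\neq x\in S}\frac{\langle Xx,x\rangle}{\langle Yx,x\rangle}$, and bounds $\frac{\alpha}{\alpha+\langle Ax,x\rangle}\le\frac{\alpha}{\alpha+\beta}$ using $\langle Ax,x\rangle\ge\lambda_{m}(A)$; the sharp constant appears immediately. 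You instead avoid the variational characterization entirely: your single-step sublemma is a purely order-theoretic computation straight from the characterization \eqref{E:Thompson} of $d$ via the cone order (and the verification $(s-1)\beta+(s-r)\alpha=0$ checks out), and since that step alone only yields the weaker bound $\log\frac{\alpha e^{d(X,Y)}+\beta}{\alpha+\beta}$, you recover the sharp constant by subdividing the geodesic $X\#_{t}Y$ --- where the needed inputs $\lambda_{1}(X\#_{t}Y)\le\alpha$ and $d(\gamma(t_k),\gamma(t_{k+1}))=\frac{1}{N}d(X,Y)$ both hold --- and passing to the limit $N\to\infty$; in effect you re-derive the mean value theorem at the level of the metric space, and your Finsler-length variant makes the same computation infinitesimal. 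What each buys: the paper's argument is shorter and exploits finite-dimensional spectral theory directly, while yours is more geometric and more portable (it uses only the order characterization of $d$, monotonicity of $\#_{t}$, and the geodesic property, so it transfers to settings where a min--max formula is unavailable), at the cost of the extra subdivision-and-limit step.
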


\section{Multivariable means from nonlinear matrix equation}

Let $\mathbb{A} = (A_{1}, \ldots, A_{n}) \in \mathbb{P}_{m}^{n}$ and let $\omega = (w_{1}, \dots, w_{n}) \in \Delta_{n}$, the set of all positive probability vectors in $\mathbb{R}^{n}$.
We solve in this section the following nonlinear matrix equation
\begin{equation} \label{E:fixed}
X = \left[ \sum_{i=1}^{n} w_{i} [ (1-t) X + t A_{i} ]^{-1} \right]^{-1}
\end{equation}

\begin{proposition} \label{P:fixed}
Let $\mathbb{A} = (A_{1}, \ldots, A_{n}) \in \mathbb{P}_{m}^{n}$ and let $\omega = (w_{1}, \dots, w_{n}) \in \Delta_{n}$. Then the equation \eqref{E:fixed} has a unique positive definite solution for $t \in (0,1]$.
\end{proposition}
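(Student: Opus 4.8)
The plan is to realize the right-hand side of \eqref{E:fixed} as a self-map $F$ of $\mathbb{P}_{m}$ and to show it is a strict contraction for the Thompson metric $d$; the unique solution is then obtained from the Banach fixed point theorem together with completeness of $(\mathbb{P}_{m}, d)$. Explicitly, write
\begin{displaymath}
F(X) = \left[ \sum_{i=1}^{n} w_{i} \bigl[ (1-t) X + t A_{i} \bigr]^{-1} \right]^{-1},
\end{displaymath}
which is well-defined and positive definite for every $X \in \mathbb{P}_{m}$. The key point is to estimate $d(F(X), F(Y))$.

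The estimate proceeds in three layers, each using a lemma from Section 2. First, by part (1) of Lemma \ref{L:Thompson}, $d(F(X), F(Y)) = d\bigl( \sum_i w_i [(1-t)X + tA_i]^{-1}, \sum_i w_i [(1-t)Y + tA_i]^{-1} \bigr)$, since the Thompson metric is inversion-invariant. Second, by Lemma \ref{L:nonexpansive} (and the scaling invariance $d(cP, cQ) = d(P,Q)$ for $c > 0$, a special case of part (1) of Lemma \ref{L:Thompson}), this is bounded by $\max_{1 \le i \le n} d\bigl( [(1-t)X + tA_i]^{-1}, [(1-t)Y + tA_i]^{-1} \bigr) = \max_i d\bigl( (1-t)X + tA_i, (1-t)Y + tA_i \bigr)$. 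Third, for each fixed $i$, apply the additive contraction principle, Lemma \ref{L:contraction}, with the constant summand $tA_i$ and the variable terms $(1-t)X$, $(1-t)Y$: this yields
\begin{displaymath}
d\bigl( (1-t)X + tA_i, (1-t)Y + tA_i \bigr) \le \frac{(1-t)\,\alpha_i}{(1-t)\,\alpha_i + t\,\beta_i}\, d\bigl((1-t)X, (1-t)Y\bigr) = \frac{(1-t)\,\alpha_i}{(1-t)\,\alpha_i + t\,\beta_i}\, d(X, Y),
\end{displaymath}
where $\alpha_i = \max\{\lambda_1((1-t)X), \lambda_1((1-t)Y)\}$ and $\beta_i = \lambda_m(tA_i)$, again using scaling invariance in the last step.

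The main obstacle is that the contraction constant $\frac{(1-t)\alpha_i}{(1-t)\alpha_i + t\beta_i}$ depends on $X$ and $Y$ through $\alpha_i$ and can approach $1$ as $\|X\|$ or $\|Y\|$ grows, so $F$ is not a uniform contraction on all of $\mathbb{P}_{m}$; one must restrict to a suitable $F$-invariant closed bounded set. The remedy is standard: pick any $R_0 \in \mathbb{P}_m$ and show that the closed $d$-ball $\overline{B}(R_0, r)$ of sufficiently large radius $r$ is mapped into itself by $F$, which follows because $F$ is continuous and a crude a priori bound shows $d(F(X), F(R_0))$ cannot exceed $d(X, R_0)$ by more than a fixed amount — alternatively, observe directly that $\mathcal{H}(\omega;\mathbb{A}) \le F(X) \le \mathcal{A}(\omega; \mathbb{A})$ for all $X$ in the order interval $[\mathcal{H}, \mathcal{A}]$ (since $t \in (0,1]$ makes $(1-t)X + tA_i$ lie between appropriate bounds), so the order interval $K = [\,\mathcal{H}(\omega;\mathbb{A}),\, \mathcal{A}(\omega;\mathbb{A})\,]$ is a compact (hence $d$-complete) $F$-invariant set. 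On $K$ the quantities $\lambda_1((1-t)X)$ and $\lambda_m(tA_i)$ are bounded above and below respectively by constants independent of $X$, so the contraction constant is bounded by some $c < 1$ uniformly on $K$. Banach's theorem on $(K, d)$ then gives a unique fixed point in $K$; uniqueness in all of $\mathbb{P}_m$ follows since any fixed point automatically lies in $K$ by the same order bounds. The case $t = 1$ is immediate: the equation reads $X = \mathcal{H}(\omega; \mathbb{A})$, so the solution is the weighted harmonic mean.
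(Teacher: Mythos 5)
Your estimate of $d(F(X),F(Y))$ is exactly the paper's: inversion invariance from Lemma \ref{L:Thompson}(1), the max-over-sums bound of Lemma \ref{L:nonexpansive}, and the additive contraction principle of Lemma \ref{L:contraction} applied to the fixed summand $tA_i$, yielding the ratio $\frac{(1-t)\alpha}{(1-t)\alpha+t\lambda_m(A_i)}$. Where you diverge is in what you do with that ratio. The paper simply observes that it is strictly less than $1$ for each fixed pair $(X,Y)$ and invokes the Banach fixed point theorem; you correctly point out that the ratio depends on $\alpha=\max\{\lambda_1(X),\lambda_1(Y)\}$ and tends to $1$ as the arguments grow, so $F$ is only pointwise contractive, not a uniform contraction on all of $\mathbb{P}_m$ --- and a merely contractive map on a complete non-compact space need not have a fixed point. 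Your repair is sound: the order interval $K=[\mathcal{H}(\omega;\mathbb{A}),\mathcal{A}(\omega;\mathbb{A})]$ is $F$-invariant (the upper bound by the arithmetic--harmonic inequality together with $X\leq\mathcal{A}$, the lower bound by operator convexity of the inverse together with $X\geq\mathcal{H}$), it is $d$-complete, on it $\alpha\leq\lambda_1(\mathcal{A}(\omega;\mathbb{A}))$ gives a uniform constant $c<1$, and any fixed point in $\mathbb{P}_m$ lands in $K$ by the same two inequalities (this is in effect the computation the paper defers to Theorem \ref{T:properties}(8)). So your write-up proves the same statement by the same mechanism but closes a step the paper leaves implicit; the only cost is the extra verification of invariance of $K$, which you have supplied correctly.
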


\begin{proof}
When $t = 1$, we obtain the weighted harmonic mean $\displaystyle X = \left[ \sum_{i=1}^{n} w_{i} A_{i}^{-1} \right]^{-1}$. Assume that $t \in (0,1)$. Set $\displaystyle f(X) = \left[ \sum_{i=1}^{n} w_{i} [ (1-t) X + t A_{i} ]^{-1} \right]^{-1}$ for $X \in \mathbb{P}_{m}$. By Lemma \ref{L:Thompson}, Lemma \ref{L:nonexpansive}, and Lemma \ref{L:contraction}
\begin{displaymath}
\begin{split}
\displaystyle d(f(X), f(Y)) & = d \left( \sum_{i=1}^{n} w_{i} [ (1-t) X + t A_{i} ]^{-1}, \sum_{i=1}^{n} w_{i} [ (1-t) Y + t A_{i} ]^{-1} \right) \\
& \leq \underset{1 \leq i \leq n}{\max} \, d((1-t) X + t A_{i}, (1-t) Y + t A_{i}) \\
& \leq \underset{1 \leq i \leq n}{\max} \, \frac{(1-t) \alpha}{(1-t) \alpha + t \beta_{i}} d(X, Y),
\end{split}
\end{displaymath}
where $\displaystyle \alpha = \max \{ \lambda_{1}(X), \lambda_{1}(Y) \}$, and $\beta_{i} = \lambda_{m}(A_{i})$. Note that
$$ \displaystyle 0 < \frac{(1-t) \alpha}{(1-t) \alpha + t \beta_{i}} < 1, $$
since $\beta_{i} > 0$ for all $i = 1, \dots, n$.
By the Banach fixed point theorem, there exists a unique solution $X \in \mathbb{P}_{m}$ of the equation $X = f(X)$.
\end{proof}

\begin{definition}
Let $\mathbb{A} = (A_{1}, \ldots, A_{n}) \in \mathbb{P}_{m}^{n}$ and let $\omega = (w_{1}, \dots, w_{n}) \in \Delta_{n}$. We define the matrix mean $G_{t}(\omega; \mathbb{A})$ for $t \in (0,1]$ as the unique positive definite solution $X$ of the equation \eqref{E:fixed}.
\end{definition}

\begin{remark} \label{R:Banach}
The function $\displaystyle f(X) = \left[ \sum_{i=1}^{n} w_{i} [ (1-t) X + t A_{i} ]^{-1} \right]^{-1}$ for $X \in \mathbb{P}_{m}$ is monotone increasing. Moreover, the Banach fixed point theorem yields that
\begin{displaymath}
\lim_{k \to \infty} f^{k}(Z) = G_{t}(\omega; \mathbb{A}) \ \textrm{for any} \ Z \in \mathbb{P}_{m}.
\end{displaymath}
\end{remark}

\begin{remark} \label{R:alternative}
The unique positive definite solution $X$ of the equation \eqref{E:fixed} satisfies the following alternative nonlinear equation
\begin{displaymath}
\displaystyle I = \sum_{i=1}^{n} w_{i} X^{1/2} \left[ (1-t) X + t A_{i} \right]^{-1} X^{1/2} = \sum_{i=1}^{n} w_{i} \left[ (1-t) I + t X^{-1/2} A_{i} X^{-1/2} \right]^{-1}.
\end{displaymath}
This can be written as
\begin{equation} \label{E:Req}
\displaystyle \mathcal{R}_{\frac{1-t}{t}} \left( \omega; X^{-1/2} \mathbb{A} X^{-1/2} \right) = I,
\end{equation}
where $\displaystyle X^{-1/2} \mathbb{A} X^{-1/2} := (X^{-1/2} A_{1} X^{-1/2}, \ldots, X^{-1/2} A_{n} X^{-1/2})$. Here, $\mathcal{R}_{\frac{1-t}{t}}$ denotes the resolvent mean of parameter $\displaystyle \frac{1-t}{t}$ \cite{KLL}. In other words, $G_{t}(\omega; \mathbb{A})$ is the unique positive definite solution $X$ of the equation \eqref{E:Req}.
\end{remark}

We see fundamental properties of $G_{t}$ in perspective on the Ando-Li-Mathias's properties of multivariable geometric mean.
Let $p \in \mathbb{R}$, $\sigma$ a permutation on $\{ 1, \dots, n \}$, and $S \in \mathrm{GL}_{m}$, the general linear group of all $m \times m$ invertible matrices. For convenience, we denote
\begin{displaymath}
\begin{split}
\mathbb{A}^{p} & = (A_{1}^{p}, \ldots, A_{n}^{p}) \\
S \mathbb{A} S^{*} & = (S A_{1} S^{*}, \ldots, S A_{n} S^{*}) \\
\mathbb{A}_{\sigma} & = (A_{\sigma(1)}, \ldots, A_{\sigma(n)}) \\
\omega_{\sigma} & = (w_{\sigma(1)}, \ldots, w_{\sigma(n)}).
\end{split}
\end{displaymath}

\begin{theorem} \label{T:properties}
Let $\mathbb{A} = (A_{1}, \ldots, A_{n}), \mathbb{B} = (B_{1}, \ldots, B_{n}) \in \mathbb{P}_{m}^{n}$, and let $\omega = (w_{1}, \dots, w_{n}) \in \Delta_{n}$. Then for $t \in (0,1]$
\begin{itemize}
\item[(1)] $\mathrm{(Idempotency)}$ $G_{t}(\omega; A, \dots, A) = A$ for any $A \in \mathbb{P}_{m}$;

\item[(2)] $\mathrm{(Homogeneity)}$ $G_{t}(\omega; \alpha \mathbb{A}) = \alpha G_{t}(\omega; \mathbb{A})$ for any $\alpha > 0$;

\item[(3)] $\mathrm{(Permutation \ invariance)}$ $G_{t}(\omega_{\sigma}; \mathbb{A}_{\sigma}) = G_{t}(\omega; \mathbb{A})$ for any permutation $\sigma$ on $\{ 1, \dots, n \}$;

\item[(4)] $\mathrm{(Monotonicity)}$ $G_{t}(\omega; \mathbb{A}) \leq G_{t}(\omega; \mathbb{B})$ whenever $A_{i} \leq B_{i}$ for all $i = 1, \dots, n$;

\item[(5)] $\mathrm{(Continuity)}$ $G_{t}(\omega; \cdot): \mathbb{P}_{m}^{n} \to \mathbb{P}_{m}$ is non-expansive for the Thompson metric:
\begin{displaymath}
d(G_{t}(\omega; \mathbb{A}), G_{t}(\omega; \mathbb{B})) \leq \underset{1 \leq i \leq n}{\max} \, d(A_{i}, B_{i});
\end{displaymath}

\item[(6)] $\mathrm{(Congruence \ invariance)}$ $G_{t}(\omega; S \mathbb{A} S^{*}) = S G_{t}(\omega; \mathbb{A}) S^{*}$ for any $S \in \mathrm{GL}_{m}$;


\item[(7)] $\mathrm{(Self-duality)}$ $G_{1-t}(\omega; \mathbb{A}^{-1}) = G_{t}(\omega; \mathbb{A})^{-1}$ for $t \in (0,1)$;

\item[(8)] $\mathrm{(Arithmetic-G-harmonic \ mean \ inequalities)}$
\begin{displaymath}
\mathcal{H}(\omega; \mathbb{A}) = \left[ \sum_{i=1}^{n} w_{i} A_{i}^{-1} \right]^{-1} \leq G_{t}(\omega; \mathbb{A}) \leq \sum_{i=1}^{n} w_{i} A_{i} = \mathcal{A}(\omega; \mathbb{A}).
\end{displaymath}
\end{itemize}
\end{theorem}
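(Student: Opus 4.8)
The plan is to organize the eight items into three groups, by proof technique.

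\emph{The algebraic items \textup{(1), (2), (3), (6).}} Each is obtained from the same mechanism: exhibit a matrix that solves \eqref{E:fixed} for the transformed data, then invoke uniqueness (Proposition~\ref{P:fixed}). With $X:=G_{t}(\omega;\mathbb{A})$: if all $A_{i}=A$ then $f(A)=\bigl[\sum_{i}w_{i}A^{-1}\bigr]^{-1}=A$, giving (1); for (2), $\alpha X$ solves the $\alpha\mathbb{A}$-equation since $[(1-t)\alpha X+t\alpha A_{i}]^{-1}=\alpha^{-1}[(1-t)X+tA_{i}]^{-1}$ and the scalar factors through the sum and the outer inverse; (3) is immediate because the right side of \eqref{E:fixed} depends only on the multiset $\{(w_{i},A_{i})\}$; and for (6), $SXS^{*}$ solves the $S\mathbb{A}S^{*}$-equation using $[(1-t)SXS^{*}+tSA_{i}S^{*}]^{-1}=(S^{*})^{-1}[(1-t)X+tA_{i}]^{-1}S^{-1}$ together with pulling $S,S^{*}$ out of the sum. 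Each is one line of bookkeeping.

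\emph{The order and metric items \textup{(4), (5).}} Write $f_{\mathbb{A}}$ for the map $f$ built from $\mathbb{A}$. If $A_{i}\le B_{i}$ for all $i$, then reversing the order under inversion twice — first on $(1-t)X+tA_{i}\le(1-t)X+tB_{i}$, then on the weighted sums of the inverses — gives $f_{\mathbb{A}}(X)\le f_{\mathbb{B}}(X)$ for all $X\in\mathbb{P}_{m}$. Starting the iteration of Remark~\ref{R:Banach} at $Z:=G_{t}(\omega;\mathbb{A})$ we get $Z=f_{\mathbb{A}}(Z)\le f_{\mathbb{B}}(Z)$, so monotonicity of $f_{\mathbb{B}}$ makes $(f_{\mathbb{B}}^{k}(Z))_{k}$ increasing with limit $G_{t}(\omega;\mathbb{B})$; hence $G_{t}(\omega;\mathbb{A})\le G_{t}(\omega;\mathbb{B})$, which is (4). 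For (5) no new contraction estimate is needed: with $r:=\max_{i}d(A_{i},B_{i})$, \eqref{E:Thompson} gives $e^{-r}A_{i}\le B_{i}\le e^{r}A_{i}$ for each $i$, so (4) and (2) yield $e^{-r}G_{t}(\omega;\mathbb{A})\le G_{t}(\omega;\mathbb{B})\le e^{r}G_{t}(\omega;\mathbb{A})$, and reading this back through \eqref{E:Thompson} gives $d(G_{t}(\omega;\mathbb{A}),G_{t}(\omega;\mathbb{B}))\le r$.

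\emph{Self-duality \textup{(7)} and the bounds \textup{(8).}} For (7) I would use the alternative form of Remark~\ref{R:alternative}: put $X:=G_{t}(\omega;\mathbb{A})$ and $C_{i}:=X^{-1/2}A_{i}X^{-1/2}$, so $\sum_{i}w_{i}[(1-t)I+tC_{i}]^{-1}=I$, and I must check that $X^{-1}$ satisfies the analogous equation for parameter $1-t$ and data $\mathbb{A}^{-1}$, i.e.\ $\sum_{i}w_{i}[tI+(1-t)C_{i}^{-1}]^{-1}=I$ (note $X^{1/2}A_{i}^{-1}X^{1/2}=C_{i}^{-1}$). Since $C_{i}$ commutes with $[(1-t)I+tC_{i}]^{-1}$,
\[
[tI+(1-t)C_{i}^{-1}]^{-1}=C_{i}\,[(1-t)I+tC_{i}]^{-1}=\frac{1}{t}\Bigl(I-(1-t)[(1-t)I+tC_{i}]^{-1}\Bigr),
\]
and summing against $w_{i}$ while using $\sum_{i}w_{i}[(1-t)I+tC_{i}]^{-1}=I$ gives exactly $I$; uniqueness then gives $G_{1-t}(\omega;\mathbb{A}^{-1})=X^{-1}=G_{t}(\omega;\mathbb{A})^{-1}$. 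For (8) (writing $\mathcal{A},\mathcal{H}$ for the weighted arithmetic and harmonic means), start the iteration at $\mathcal{A}$: operator convexity of $Y\mapsto Y^{-1}$ gives $\sum_{i}w_{i}[(1-t)\mathcal{A}+tA_{i}]^{-1}\ge[(1-t)\mathcal{A}+t\mathcal{A}]^{-1}=\mathcal{A}^{-1}$, hence $f(\mathcal{A})\le\mathcal{A}$, and monotone iteration pushes the decreasing sequence $f^{k}(\mathcal{A})$ down to $G_{t}(\omega;\mathbb{A})\le\mathcal{A}$. The lower bound is the mirror: either start at $\mathcal{H}$ and use the two-point inequality $(1-t)\mathcal{H}+tA_{i}\ge[(1-t)\mathcal{H}^{-1}+tA_{i}^{-1}]^{-1}$ to get $f(\mathcal{H})\ge\mathcal{H}$ and iterate upward, or apply the upper bound to $(1-t,\mathbb{A}^{-1})$ and invert via (7) (the case $t=1$, where $G_{1}=\mathcal{H}$, is the classical AM--HM inequality).

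\emph{Main obstacle.} Everything but (7) reduces to combining uniqueness with the monotone-iteration principle of Remark~\ref{R:Banach} (and, for (8), two classical operator inequalities); the one real computation is the commuting-matrix identity driving self-duality. One trap to avoid in (5) is re-using the contraction estimate from the proof of Proposition~\ref{P:fixed}, whose constant $\tfrac{(1-t)\alpha}{(1-t)\alpha+t\beta_{i}}$ is not uniform in the data — deducing (5) from homogeneity and monotonicity sidesteps this cleanly.
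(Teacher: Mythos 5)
Your proposal is correct, and its backbone — uniqueness of the fixed point plus the monotone-iteration principle of Remark~\ref{R:Banach} — is exactly the paper's strategy for (1)--(4), (6). The differences are minor but worth noting. For (5) you prove the non-expansiveness directly (extract $e^{-r}A_{i}\le B_{i}\le e^{r}A_{i}$ from \eqref{E:Thompson}, then sandwich via monotonicity and homogeneity); the paper instead just cites the general fact from \cite{KL} that monotone subhomogeneous maps are $d$-non-expansive — your argument is the self-contained proof of that citation. For (7) your commuting-matrix identity $[tI+(1-t)C_{i}^{-1}]^{-1}=\tfrac{1}{t}\bigl(I-(1-t)[(1-t)I+tC_{i}]^{-1}\bigr)$ is the same computation the paper performs with the resolvent formula $(A+B)^{-1}=A^{-1}-A^{-1}(A^{-1}+B^{-1})^{-1}A^{-1}$; both correctly reduce to $\sum_{i}w_{i}[(1-t)I+tC_{i}]^{-1}=I$. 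For (8) the paper's upper bound is a one-line algebraic cancellation — from $X\le\sum_{i}w_{i}[(1-t)X+tA_{i}]=(1-t)X+t\mathcal{A}$ one subtracts $(1-t)X$ to get $X\le\mathcal{A}$ with no iteration needed — and then gets the lower bound by duality as in your second option; your route of starting the iteration at $\mathcal{A}$ (resp.\ $\mathcal{H}$) and pushing down (resp.\ up) is equally valid but slightly heavier. Your closing caution about not reusing the non-uniform contraction constant in (5) is well taken and is precisely why a monotonicity-based argument (yours or the cited one) is the right tool there.
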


\begin{proof}
Note that $\displaystyle G_{t=1}(\omega; \mathbb{A}) = \left[ \sum_{i=1}^{n} w_{i} A_{i}^{-1} \right]^{-1}$ holds all properties, so we assume $t \in (0,1)$ in the following. Items (1)-(3) are obvious.
\begin{itemize}



\item[(4)] Let $X = G_{t}(\omega; \mathbb{A})$, and let $\displaystyle g(Z) = \left[ \sum_{i=1}^{n} w_{i} [ (1-t) Z + t B_{i} ]^{-1} \right]^{-1}$ for $Z \in \mathbb{P}_{m}$. Assume that $A_{i} \leq B_{i}$ for all $i = 1, \dots, n$. Then
\begin{displaymath}
X = \left[ \sum_{i=1}^{n} w_{i} [ (1-t) X + t A_{i} ]^{-1} \right]^{-1} \leq \left[ \sum_{i=1}^{n} w_{i} [ (1-t) X + t B_{i} ]^{-1} \right]^{-1} = g(X).
\end{displaymath}
Inductively, $X \leq g(X) \leq \cdots \leq g^{k}(X)$ for all $k \in \mathbb{N}$,
and hence, by Remark \ref{R:Banach} $X \leq G_{t}(\omega; \mathbb{B})$.

\item[(5)] It is known from \cite{KL} that every monotonic and subhomogeneous map is non-expansive for the Thompson metric.
From item (2) the homogeneity of $G$ implies the sub-homogeneity, and hence, $G$ is non-expansive by item (4).

\item[(6)] Let $X = G_{t}(\omega; S \mathbb{A} S^{*})$ for any $S \in \mathrm{GL}_{m}$.
By Proposition \ref{P:fixed} we have
\begin{displaymath}
\begin{split}
X & = \left[ \sum_{i=1}^{n} w_{i} [ (1-t) X + t S A_{i} S^{*} ]^{-1} \right]^{-1} \\
& = \left[ (S^{*})^{-1} \left[ \sum_{i=1}^{n} w_{i} [ (1-t) S^{-1} X (S^{*})^{-1} + t A_{i} ]^{-1} \right] S^{-1} \right]^{-1} \\
& = S \left[ \sum_{i=1}^{n} w_{i} [ (1-t) S^{-1} X (S^{*})^{-1} + t A_{i} ]^{-1} \right]^{-1} S^{*}.
\end{split}
\end{displaymath}
Thus, $S^{-1} X (S^{*})^{-1} = G_{t}(\omega; \mathbb{A})$, so it is proved.

\item[(7)] Let $X = G_{t}(\omega; \mathbb{A})$. From Remark \ref{R:alternative}
\begin{displaymath}
\displaystyle I = \sum_{i=1}^{n} w_{i} \left[ (1-t) I + t X^{-1/2} A_{i} X^{-1/2} \right]^{-1}.
\end{displaymath}
Applying the formula $(A + B)^{-1} = A^{-1} - A^{-1} (A^{-1} + B^{-1})^{-1} A^{-1}$ in \cite{Zh} to the above equation yields
\begin{displaymath}
\begin{split}
\displaystyle I & = \sum_{i=1}^{n} w_{i} \left\{ \frac{1}{1-t} I - \frac{1}{(1-t)^{2}} \left[ \frac{1}{1-t} I + \frac{1}{t} X^{1/2} A_{i}^{-1} X^{1/2} \right]^{-1} \right\} \\
& = \frac{1}{1-t} I - \frac{1}{(1-t)^{2}} \sum_{i=1}^{n} w_{i} \left[ \frac{1}{1-t} I + \frac{1}{t} X^{1/2} A_{i}^{-1} X^{1/2} \right]^{-1} \\
& = \frac{1}{1-t} I - \frac{t}{1-t} \sum_{i=1}^{n} w_{i} \left[ t I + (1-t) X^{1/2} A_{i}^{-1} X^{1/2} \right]^{-1}.
\end{split}
\end{displaymath}
Equivalently, we have
\begin{displaymath}
\displaystyle I = \sum_{i=1}^{n} w_{i} \left[ t I + (1-t) X^{1/2} A_{i}^{-1} X^{1/2} \right]^{-1}.
\end{displaymath}
Thus, $X^{-1} = G_{1-t}(\omega; \mathbb{A}^{-1})$.

\item[(8)] Let $X = G_{t}(\omega; \mathbb{A})$. Then by the arithmetic-harmonic mean inequality
\begin{displaymath}
X = \left[ \sum_{i=1}^{n} w_{i} [ (1-t) X + t A_{i} ]^{-1} \right]^{-1} \leq \sum_{i=1}^{n} w_{i} [ (1-t) X + t A_{i} ] = (1-t) X + t \sum_{i=1}^{n} w_{i} A_{i}.
\end{displaymath}
So we obtain that $\displaystyle X \leq \sum_{i=1}^{n} w_{i} A_{i}$.

Replacing $t$ and $A_{i}$ by $1-t$ and $A_{i}^{-1}$ for all $i = 1, \dots, n$ in the previous result, we have
\begin{displaymath}
G_{1-t}(\omega; \mathbb{A}^{-1}) \leq \sum_{i=1}^{n} w_{i} A_{i}^{-1}.
\end{displaymath}
By item (7) and taking inverse on both sides, we obtain $\displaystyle G_{t}(\omega; \mathbb{A}) \geq \left[ \sum_{i=1}^{n} w_{i} A_{i}^{-1} \right]^{-1}$.
\end{itemize}
\end{proof}

\begin{remark} \label{R:log-Euclidean}
From Theorem \ref{T:properties} (8), the multivariable mean $G_{t}$ is the Lie-Trotter mean \cite{HK}. As a consequence,
\begin{equation} \label{E:limit}
\lim_{p \to 0} G_{t}(\omega; A_{1}^{p}, \dots, A_{n}^{p})^{1/p} = \exp \left( \sum_{i=1}^{n} w_{i} \log A_{i} \right),
\end{equation}
where the right-hand side is known as the log-Euclidean mean.
\end{remark}

\begin{proposition} \label{P:more-prop}
Let $\mathbb{A} = (A_{1}, \ldots, A_{n}) \in \mathbb{P}_{m}^{n}$, and $\omega = (w_{1}, \dots, w_{n}) \in \Delta_{n}$. Then for $t \in (0,1]$
\begin{itemize}
\item[(1)] $\displaystyle \Vert G_{1-t}(\omega; \mathbb{A}) \Vert \leq \left[ \sum_{i=1}^{n} w_{i} \Vert A_{i} \Vert^{t} \right]^{1/t}$ for an operator norm $\Vert \cdot \Vert$,
\item[(2)] $\Phi(G_{t}(\omega; \mathbb{A})) \leq G_{t}(\omega; \Phi(\mathbb{A}))$ for any positive linear map $\Phi$, where $$ \Phi(\mathbb{A}) := (\Phi(A_{1}), \dots, \Phi(A_{n})). $$
\end{itemize}
\end{proposition}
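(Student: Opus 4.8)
The plan is to treat the two items separately; each reduces to an elementary inequality once the defining equation \eqref{E:fixed} is used.

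For (1), write $X=G_{1-t}(\omega;\mathbb{A})$, so by \eqref{E:fixed} with parameter $1-t$ we have $X^{-1}=\sum_{i=1}^{n}w_{i}\,[\,tX+(1-t)A_{i}\,]^{-1}$. Since $A_{i}\le\|A_{i}\|I$ and $X\le\|X\|I$, monotonicity of the inverse gives $[\,tX+(1-t)A_{i}\,]^{-1}\ge(t\|X\|+(1-t)\|A_{i}\|)^{-1}I$, so $X^{-1}\ge\big(\sum_{i}w_{i}(t\|X\|+(1-t)\|A_{i}\|)^{-1}\big)I$, and taking least eigenvalues,
\[
\frac{1}{\|X\|}\ge\sum_{i=1}^{n}\frac{w_{i}}{t\|X\|+(1-t)\|A_{i}\|},\qquad\text{equivalently}\qquad\sum_{i=1}^{n}\frac{w_{i}}{t+(1-t)u_{i}}\le1,
\]
where $u_{i}:=\|A_{i}\|/\|X\|$. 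The key point is the pointwise inequality $u^{t}\ge u/(t+(1-t)u)$ for $u>0$, which follows from weighted AM--GM in the form $u^{1-t}\le t\cdot1+(1-t)u$; combining it with the identity $\dfrac{u}{t+(1-t)u}=\dfrac{1}{1-t}-\dfrac{t}{1-t}\cdot\dfrac{1}{t+(1-t)u}$, the constraint above, and $\sum_{i}w_{i}=1$,
\[
\sum_{i=1}^{n}w_{i}u_{i}^{t}\ge\sum_{i=1}^{n}w_{i}\,\frac{u_{i}}{t+(1-t)u_{i}}=\frac{1}{1-t}-\frac{t}{1-t}\sum_{i=1}^{n}\frac{w_{i}}{t+(1-t)u_{i}}\ge\frac{1}{1-t}-\frac{t}{1-t}=1,
\]
i.e. $\|X\|^{t}\le\sum_{i}w_{i}\|A_{i}\|^{t}$, as claimed. (Alternatively, Theorem \ref{T:properties}(4) first reduces the problem to the scalar mean $G_{1-t}(\omega;\|A_{1}\|,\dots,\|A_{n}\|)$, to which the same scalar computation applies.)

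For (2), put $X=G_{t}(\omega;\mathbb{A})$ and $C_{i}:=(1-t)X+tA_{i}$, so that \eqref{E:fixed} reads $X=\big[\sum_{i}w_{i}C_{i}^{-1}\big]^{-1}=\mathcal{H}(\omega;\mathbb{C})$, the weighted harmonic mean of $\mathbb{C}=(C_{1},\dots,C_{n})$ (here $\Phi$ is tacitly assumed to send positive definite matrices to positive definite matrices, so that $G_{t}(\omega;\Phi(\mathbb{A}))$ is defined). Let $g(Z)=\big[\sum_{i}w_{i}((1-t)Z+t\Phi(A_{i}))^{-1}\big]^{-1}$. By Remark \ref{R:Banach}, $g$ is monotone increasing and $g^{k}(\Phi(X))\to G_{t}(\omega;\Phi(\mathbb{A}))$, so it suffices to check $\Phi(X)\le g(\Phi(X))$: then $\Phi(X)\le g(\Phi(X))\le g^{2}(\Phi(X))\le\cdots\to G_{t}(\omega;\Phi(\mathbb{A}))$. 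By linearity of $\Phi$ and inversion, $\Phi(X)\le g(\Phi(X))$ is equivalent to $\Phi(X)^{-1}\ge\sum_{i}w_{i}\Phi(C_{i})^{-1}$, i.e. to
\[
\Phi\big(\mathcal{H}(\omega;\mathbb{C})\big)\le\mathcal{H}\big(\omega;\Phi(\mathbb{C})\big).
\]
This is classical: $\mathcal{H}(\omega;\mathbb{C})$ equals the iterated parallel sum $(w_{1}^{-1}C_{1}):\cdots:(w_{n}^{-1}C_{n})$, and the parallel sum $A:B=(A^{-1}+B^{-1})^{-1}$ is associative, jointly operator monotone, and satisfies the transformer inequality $\Phi(A:B)\le\Phi(A):\Phi(B)$ for positive linear maps $\Phi$; an induction on $n$ finishes the proof. (Equivalently one can argue from the variational formula $\langle v,\mathcal{H}(\omega;\mathbb{C})v\rangle=\inf\{\sum_{i}w_{i}\langle z_{i},C_{i}z_{i}\rangle:\sum_{i}w_{i}z_{i}=v\}$.)

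The substantive content is small: in (1) it is the realization that the single inequality $u^{1-t}\le t+(1-t)u$, fed into the constraint extracted from \eqref{E:fixed}, already yields the power-mean bound; in (2) it is the harmonic-mean transformer inequality $\Phi(\mathcal{H})\le\mathcal{H}(\Phi)$, the rest being the monotone fixed-point iteration already available from Remark \ref{R:Banach}.
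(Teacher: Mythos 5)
Your proof is correct, and item (1) is done by a genuinely different route than the paper's. For (1) the paper first applies the operator arithmetic--geometric mean inequality $((1-t)X+tA_i)^{-1}\le X^{-1}\#_tA_i^{-1}$ to the fixed-point equation for $G_t$, then invokes the Bhatia--Grover norm inequality $\Vert A\#_tB\Vert\le\Vert A\Vert^{1-t}\Vert B\Vert^{t}$ to get $\Vert X^{-1}\Vert\le\sum_i w_i\Vert X^{-1}\Vert^{1-t}\Vert A_i^{-1}\Vert^{t}$, and finally converts the resulting bound on $\Vert G_t(\omega;\mathbb{A})^{-1}\Vert$ into the stated one via self-duality (Theorem \ref{T:properties}(7)). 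You instead scalarize directly: the crude bounds $X\le\Vert X\Vert I$, $A_i\le\Vert A_i\Vert I$ turn the fixed-point equation for $G_{1-t}$ into the scalar constraint $\sum_i w_i/(t+(1-t)u_i)\le1$, and the elementary inequality $u^{1-t}\le t+(1-t)u$ finishes it. This avoids both the external norm inequality from \cite{BG} and the self-duality step, and is more self-contained; the paper's route has the small advantage of isolating the reusable operator inequality $X^{-1}\le\sum_i w_i(X^{-1}\#_tA_i^{-1})$, which reappears in Proposition \ref{P:bounds}. One shared wrinkle: at $t=1$ your identity $\frac{u}{t+(1-t)u}=\frac{1}{1-t}-\frac{t}{1-t}\cdot\frac{1}{t+(1-t)u}$ degenerates (and the paper's appeal to (7) also requires $t\in(0,1)$), but there the claim is just the triangle inequality for the arithmetic mean, so a one-line remark covers it. For (2) your argument is essentially the paper's: the paper gets $\Phi(X)\le h(\Phi(X))$ from \cite[Theorem 4.1.5]{Bh} and iterates via Remark \ref{R:Banach}, exactly as you do; you merely justify the key harmonic-mean transformer inequality $\Phi(\mathcal{H}(\omega;\mathbb{C}))\le\mathcal{H}(\omega;\Phi(\mathbb{C}))$ from the parallel-sum inequality $\Phi(A:B)\le\Phi(A):\Phi(B)$ and induction rather than by citation, and you rightly note the implicit assumption that $\Phi$ is strictly positive, which the paper leaves unstated.
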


\begin{proof}
Let $X = G_{t}(\omega; \mathbb{A})$.
\begin{itemize}
\item[(1)] By \cite[Theorem 3]{BG} and the sub-multiplicativity of operator norm,
\begin{equation} \label{E:norm}
\Vert A \#_{t} B \Vert \leq \Vert A^{1-t} B^{t} \Vert \leq \Vert A \Vert^{1-t} \Vert B^{t} \Vert
\end{equation}
for $A, B \in \mathbb{P}_{m}$. Since
\begin{displaymath}
X^{-1} = \sum_{i=1}^{n} w_{i} ((1-t) X + t A_{i})^{-1} \leq \sum_{i=1}^{n} w_{i} (X \#_{t} A_{i})^{-1} = \sum_{i=1}^{n} w_{i} (X^{-1} \#_{t} A_{i}^{-1})
\end{displaymath}
by the arithmetic-geometric mean inequality, we have from \eqref{E:norm}
\begin{displaymath}
\Vert X^{-1} \Vert \leq \sum_{i=1}^{n} w_{i} \Vert X^{-1} \#_{t} A_{i}^{-1} \Vert \leq \sum_{i=1}^{n} w_{i} \Vert X^{-1} \Vert^{1-t} \Vert A_{i}^{-1} \Vert^{t}.
\end{displaymath}
Solving for $\Vert X^{-1} \Vert$, we get
\begin{displaymath}
\Vert X^{-1} \Vert = \Vert G_{t}(\omega; \mathbb{A})^{-1} \Vert \leq \left[ \sum_{i=1}^{n} w_{i} \Vert A_{i}^{-1} \Vert^{t} \right]^{1/t}.
\end{displaymath}
Applying Theorem \ref{T:properties} (7) and replacing $A_{i}$ by $A_{i}^{-1}$ we obtain the desired inequality.

\item[(2)] Set $Y = \Phi(X)$ for any positive linear map $\Phi$.
Then we have from \cite[Theorem 4.1.5]{Bh}
\begin{displaymath}
\begin{split}
Y & = \Phi(X) = \Phi \left( \left[ \sum_{i=1}^{n} w_{i} [ (1-t) X + t A_{i} ]^{-1} \right]^{-1} \right) \\
& \leq \left[ \sum_{i=1}^{n} w_{i} [ \Phi ((1-t) X + t A_{i}) ]^{-1} \right]^{-1} = \left[ \sum_{i=1}^{n} w_{i} [ (1-t) \Phi(X) + t \Phi(A_{i}) ]^{-1} \right]^{-1} =: h(Y).
\end{split}
\end{displaymath}
Since the map $h$ is monotone increasing, $Y \leq h(Y) \leq h^{2}(Y) \leq \cdots \leq h^{k}(Y)$ for all $k \geq 2$. Taking limit as $k \to \infty$ and applying Remark \ref{R:Banach} yields
\begin{displaymath}
\Phi(G_{t}(\omega; \mathbb{A})) = Y \leq \lim_{k \to \infty} h^{k}(Y) = G_{t}(\omega; \Phi(\mathbb{A})).
\end{displaymath}
\end{itemize}
\end{proof}

\section{Inequalities and monotonicity on parameters}

In this section we focus on parameters of the multivariable mean $G_{t}$, and see that it interpolates backward the arithmetic mean and harmonic mean.

\begin{theorem} \label{T:monotone}
For $0 < s \leq t \leq 1$
\begin{displaymath}
G_{s}(\omega; \mathbb{A}) \geq G_{t}(\omega; \mathbb{A}).
\end{displaymath}
\end{theorem}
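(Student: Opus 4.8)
The plan is to mimic the monotone-iteration argument used in the proof of Theorem~\ref{T:properties}(4). Fix $X := G_{t}(\omega; \mathbb{A})$ and let $g_{s}(Z) = \left[ \sum_{i=1}^{n} w_{i} [ (1-s) Z + s A_{i} ]^{-1} \right]^{-1}$ for $Z \in \mathbb{P}_{m}$; this is just the map $f$ of Proposition~\ref{P:fixed} for the parameter $s$, so it is monotone increasing and, by Remark~\ref{R:Banach}, $\lim_{k \to \infty} g_{s}^{k}(Z) = G_{s}(\omega; \mathbb{A})$ for every $Z$. Thus it suffices to prove the single inequality $X \leq g_{s}(X)$: then $X \leq g_{s}(X) \leq g_{s}^{2}(X) \leq \cdots \leq g_{s}^{k}(X)$ for all $k$, and letting $k \to \infty$ gives $G_{t}(\omega; \mathbb{A}) = X \leq G_{s}(\omega; \mathbb{A})$.

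The crux is therefore $X \leq g_{s}(X)$, which, after taking inverses (order-reversing), is equivalent to $\sum_{i=1}^{n} w_{i} [ (1-s) X + s A_{i} ]^{-1} \leq X^{-1}$. To see this, consider the matrix-valued function $\phi(r) = \sum_{i=1}^{n} w_{i} [ (1-r) X + r A_{i} ]^{-1}$ for $r \in [0,1]$, which is well defined because $(1-r)X + rA_{i}$ is a positive definite convex combination. Three facts drive the argument: (i) $\phi(0) = X^{-1}$; (ii) $\phi(t) = X^{-1}$, which is exactly the statement that $X = G_{t}(\omega; \mathbb{A})$ solves \eqref{E:fixed} (and when $t = 1$ this reads $\phi(1) = \sum_{i} w_{i} A_{i}^{-1} = X^{-1}$ since then $X = \mathcal{H}(\omega; \mathbb{A})$); and (iii) $\phi$ is matrix convex on $[0,1]$, being a positive combination of the maps $r \mapsto [(1-r)X + rA_{i}]^{-1}$, each the composition of the operator convex map $M \mapsto M^{-1}$ on $\mathbb{P}_{m}$ with the affine map $r \mapsto (1-r)X + rA_{i}$. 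Writing $s = (1 - \tfrac{s}{t})\cdot 0 + \tfrac{s}{t}\cdot t$ with $\tfrac{s}{t} \in [0,1]$, convexity gives $\phi(s) \leq (1 - \tfrac{s}{t})\phi(0) + \tfrac{s}{t}\phi(t) = X^{-1}$, as required.

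I do not anticipate a serious obstacle beyond correctly isolating this convexity observation; the only mild point to spell out in the write-up is the operator convexity of the inversion map and its stability under pre-composition with an affine map and under positive linear combinations (all standard), plus the remark that Remark~\ref{R:Banach} applies verbatim to $g_{s}$ since it is the fixed-point map $f$ for parameter $s$. Once $X \leq g_{s}(X)$ is established, the conclusion $G_{s}(\omega; \mathbb{A}) \geq G_{t}(\omega; \mathbb{A})$ follows from the monotone iteration exactly as indicated above, and the boundary case $t = 1$ is handled uniformly by the parenthetical remark in step (ii).
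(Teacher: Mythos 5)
Your proof is correct and follows essentially the same route as the paper: the paper also reduces to showing that $G_{t}$ is a sub-fixed point of the iteration map for parameter $s$ and then iterates, and its key step writes $(1-s)Z + sA_{i}$ as the convex combination $(1-\tfrac{s}{t})Z + \tfrac{s}{t}\bigl((1-t)Z + tA_{i}\bigr)$ and applies the two-variable arithmetic--harmonic mean inequality, which is exactly your operator-convexity-of-inversion argument for $\phi(s) \leq (1-\tfrac{s}{t})\phi(0) + \tfrac{s}{t}\phi(t)$. No gaps.
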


\begin{proof}
Let $X = G_{s}(\omega; \mathbb{A})$ and $Y = G_{t}(\omega; \mathbb{A})$. Assume that $0 < s \leq t \leq 1$. Set $\displaystyle f(Z) = \left[ \sum_{i=1}^{n} w_{i} [ (1-s) Z + s A_{i} ]^{-1} \right]^{-1}$ for $Z \in \mathbb{P}_{m}$. Then
\begin{displaymath}
\begin{split}
f(Z) & = \left[ \sum_{i=1}^{n} w_{i} \left\{ \left( 1 - \frac{s}{t} \right) Z + \frac{s}{t} ((1-t) Z + t A_{i}) \right\}^{-1} \right]^{-1} \\
& \geq \left[ \sum_{i=1}^{n} w_{i} \left\{ \left( 1 - \frac{s}{t} \right) Z^{-1} + \frac{s}{t} ((1-t) Z + t A_{i})^{-1} \right\} \right]^{-1} \\
& = \left[ \left( 1 - \frac{s}{t} \right) Z^{-1} + \frac{s}{t} \sum_{i=1}^{n} w_{i} ((1-t) Z + t A_{i})^{-1} \right]^{-1}.
\end{split}
\end{displaymath}
The inequality follows from the two-variable arithmetic-harmonic mean inequality such that $[ (1 - \lambda) A + \lambda B ]^{-1} \leq (1 - \lambda) A^{-1} + \lambda B^{-1}$ for any $A, B \in \mathbb{P}_{m}$ and $\lambda \in [0,1]$.
Substituting $Z = Y$ in the above and applying Proposition \ref{P:fixed} to $Y$ yields
\begin{displaymath}
f(Y) \geq \left[ \left( 1 - \frac{s}{t} \right) Y^{-1} + \frac{s}{t} \sum_{i=1}^{n} w_{i} ((1-t) Y + t A_{i})^{-1} \right]^{-1} = \left[ \left( 1 - \frac{s}{t} \right) Y^{-1} + \frac{s}{t} Y^{-1} \right]^{-1} = Y.
\end{displaymath}
Since the map $f$ is monotone increasing in Remark \ref{R:Banach}, we have inductively $Y \leq f(Y) \leq \cdots \leq f^{k}(Y)$ for all $k \in \mathbb{N}$. Taking the limit as $k \to \infty$ implies that
\begin{displaymath}
Y \leq \lim_{k \to \infty} f^{k}(Y) = X.
\end{displaymath}
\end{proof}

\begin{theorem} \label{T:limit}
Let $\mathbb{A} = (A_{1}, \ldots, A_{n}) \in \mathbb{P}_{m}^{n}$, and let $\omega = (w_{1}, \dots, w_{n}) \in \Delta_{n}$. Then
\begin{displaymath}
\lim_{t \to 0^{+}} G_{t}(\omega; \mathbb{A}) = \sum_{i=1}^{n} w_{i} A_{i}.
\end{displaymath}
\end{theorem}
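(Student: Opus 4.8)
The plan is to combine the monotonicity in $t$ (Theorem \ref{T:monotone}) with a direct estimate of the contraction rate coming from the proof of Proposition \ref{P:fixed}. By Theorem \ref{T:monotone}, $G_t(\omega;\mathbb{A})$ is monotone decreasing in $t$, and by Theorem \ref{T:properties}(8) it is bounded above by $\mathcal{A}(\omega;\mathbb{A}) = \sum_i w_i A_i$ and below by $\mathcal{H}(\omega;\mathbb{A})$; hence the limit $\lim_{t\to 0^+} G_t(\omega;\mathbb{A})$ exists (in the strong operator sense, as a monotone net of positive definite matrices bounded above) and equals $\sup_{t>0} G_t(\omega;\mathbb{A}) =: X_0$, with $\mathcal{H} \le X_0 \le \mathcal{A}$. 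So the real content is to identify $X_0$ with $\sum_i w_i A_i$.

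First I would bound $d(G_t(\omega;\mathbb{A}), \mathcal{A}(\omega;\mathbb{A}))$ and show it goes to $0$ as $t\to 0^+$. Write $A := \mathcal{A}(\omega;\mathbb{A}) = \sum_i w_i A_i$ and let $f_t$ denote the map $f$ from Proposition \ref{P:fixed} for the parameter $t$, so $G_t(\omega;\mathbb{A})$ is its unique fixed point. The idea is to estimate $d(f_t(A), A)$: since $A = \sum_i w_i A_i = \sum_i w_i[(1-t)A + tA_i]$ when we also add and subtract, actually $A = \sum_i w_i\big[(1-0)A + 0\cdot A_i\big]$, so $f_0(A) = A$ in the formal limit; quantitatively, $f_t(A) = \big[\sum_i w_i ((1-t)A + tA_i)^{-1}\big]^{-1}$, and by Lemma \ref{L:nonexpansive} together with Lemma \ref{L:contraction} (or directly Lemma \ref{L:Thompson}(3) applied to the segment $s\mapsto A\#_s A_i$ compared to the linear segment) one gets $d((1-t)A + tA_i, A) \le t\, d(A_i,A) + o(1)$ type control, hence $d(f_t(A),A) \le C t$ for a constant $C = C(\omega,\mathbb{A})$ depending only on the $d(A_i,A)$. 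Combined with the contraction estimate $d(f_t(X),f_t(Y)) \le \kappa_t\, d(X,Y)$ with $\kappa_t = \max_i \frac{(1-t)\alpha}{(1-t)\alpha + t\beta_i}$ (where now $\alpha$ can be taken uniform on the order interval $[\mathcal{H},\mathcal{A}]$ and $\beta_i = \lambda_m(A_i)$), the standard fixed-point error bound gives
\[
d(G_t(\omega;\mathbb{A}), A) = d(G_t(\omega;\mathbb{A}), f_t(A)) + d(f_t(A),A) \le \frac{d(f_t(A),A)}{1-\kappa_t}.
\]
Now $1 - \kappa_t = \min_i \frac{t\beta_i}{(1-t)\alpha + t\beta_i}$, which is of order $t$ as $t\to 0^+$, so $\frac{d(f_t(A),A)}{1-\kappa_t} \le \frac{Ct}{c\,t} = C/c$ — this is only a \emph{bounded} estimate, not one that tends to $0$, so a cruder version of this argument is not enough by itself.

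The main obstacle, then, is precisely this: the naive fixed-point error bound only shows $G_t(\omega;\mathbb{A})$ stays within bounded Thompson distance of $\mathcal{A}$, not that it converges to it. To get convergence I would instead argue directly with the equation. Since the limit $X_0 = \lim_{t\to 0^+} G_t(\omega;\mathbb{A})$ exists by monotonicity, I apply the defining equation $G_t^{-1} = \sum_i w_i\big[(1-t)G_t + tA_i\big]^{-1}$ and pass to the limit $t\to 0^+$: on the right-hand side $(1-t)G_t + tA_i \to X_0$ for each $i$ by continuity (using $G_t \to X_0$ and $t\to 0$), so the right-hand side tends to $\sum_i w_i X_0^{-1} = X_0^{-1}$, which is consistent but vacuous. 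So instead I would use the sharper inequality already obtained inside the proof of Theorem \ref{T:properties}(8): $G_t \le (1-t)G_t + t\sum_i w_i A_i$, i.e. $tG_t \le t\,\mathcal{A}$, giving $G_t \le \mathcal{A}$ — again just the known bound. The fix is to also produce a matching \emph{lower} bound of the form $G_t \ge \mathcal{A} - O(t)$: from $G_t^{-1} = \sum_i w_i[(1-t)G_t + tA_i]^{-1}$ and the operator convexity of inversion, expand $[(1-t)G_t + tA_i]^{-1} = [G_t + t(A_i - G_t)]^{-1} \le G_t^{-1} - t\,G_t^{-1}(A_i-G_t)G_t^{-1} + O(t^2)$ uniformly (the $O(t^2)$ is controlled because $G_t$ lives in the compact order interval $[\mathcal{H},\mathcal{A}]$, so all the matrices involved have uniformly bounded norms and uniformly bounded inverses), sum against $w_i$, and use $\sum_i w_i(A_i - G_t) = \mathcal{A} - G_t$ to get $G_t^{-1} \le G_t^{-1} - t\,G_t^{-1}(\mathcal{A} - G_t)G_t^{-1} + O(t^2)$, hence $G_t^{-1}(\mathcal{A}-G_t)G_t^{-1} \le O(t)$, hence $\mathcal{A} - G_t \le O(t)\,\|G_t\|^2 \cdot I \to 0$. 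Together with $G_t \le \mathcal{A}$ this forces $G_t \to \mathcal{A}$, completing the proof. An alternative, cleaner route avoiding the Taylor expansion: use Remark \ref{R:alternative} to write the equation as $\mathcal{R}_{(1-t)/t}(\omega; G_t^{-1/2}\mathbb{A}G_t^{-1/2}) = I$ and invoke the known limiting behavior of the resolvent mean as its parameter $\to\infty$ (the resolvent mean of parameter $r$ tends to the arithmetic mean as $r\to\infty$, and $(1-t)/t\to\infty$ as $t\to 0^+$), but this requires citing that property of $\mathcal{R}_r$ from \cite{KLL}; I would present the self-contained order-estimate argument as the main proof.
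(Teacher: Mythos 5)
Your proof is correct, and its core mechanism is the same as the paper's: identify the limit by extracting the first-order-in-$t$ behaviour of the defining equation. The paper does this by rewriting the normalized equation of Remark \ref{R:alternative} as $\sum_i w_i \frac{[(1-t)I + tX_t^{-1/2}A_iX_t^{-1/2}]^{-1} - I}{t} = 0$ and letting $t \to 0^{+}$ after securing the existence of $X_0$ from monotonicity and boundedness; you instead expand the un-normalized equation via the resolvent identity, $[(1-t)G_t + tA_i]^{-1} = G_t^{-1} - tG_t^{-1}(A_i - G_t)G_t^{-1} + t^2 S_{t,i}$ with $S_{t,i} = (NG_t^{-1})^{*}[(1-t)G_t+tA_i]^{-1}(NG_t^{-1}) \geq 0$ uniformly bounded on the order interval $[\mathcal{H},\mathcal{A}]$, and conclude $0 \leq \mathcal{A} - G_t \leq Ct\,I$. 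Your version buys two small things: an explicit $O(t)$ convergence rate, and it makes rigorous the uniformity needed to pass to the limit (the paper's interchange of the limit $t\to 0^{+}$ with the difference quotient, where $X_t$ itself varies with $t$, is left implicit); indeed your order estimate makes the preliminary monotone-convergence step dispensable. Your candid dismissal of the two insufficient routes (the raw fixed-point error bound, and the naive passage to the limit in the equation) is accurate and does not affect the final argument, which stands on its own.
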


\begin{proof}
Let $X_{t} = G_{t}(\omega; \mathbb{A})$ for $t \in (0,1)$. By Theorem \ref{T:monotone} and Theorem \ref{T:properties} (8), $X_{t}$ is monotonic and bounded. So it converges as $t \to 0^{+}$, say $\displaystyle X_{0} = \lim_{t \to 0^{+}} G_{t}(\omega; \mathbb{A})$. By Remark \ref{R:alternative}
\begin{displaymath}
\displaystyle I = \sum_{i=1}^{n} w_{i} \left[ (1-t) I + t X_{t}^{-1/2} A_{i} X_{t}^{-1/2} \right]^{-1}.
\end{displaymath}
It can be rewritten equivalently as
\begin{displaymath}
\displaystyle \sum_{i=1}^{n} w_{i} \frac{\left[ (1-t) I + t X_{t}^{-1/2} A_{i} X_{t}^{-1/2} \right]^{-1} - I}{t} = 0.
\end{displaymath}
Taking the limit as $t \to 0^{+}$ implies that
\begin{displaymath}
\displaystyle \sum_{i=1}^{n} w_{i} [I - X_{0}^{-1/2} A_{i} X_{0}^{-1/2}] = 0.
\end{displaymath}
Therefore, $\displaystyle X_{0} = \sum_{i=1}^{n} w_{i} A_{i}$.
\end{proof}

\begin{remark}
Theorem \ref{T:limit} allows us to define $G_{0^{+}}(\omega; \mathbb{A})$ as the weighted arithmetic mean, and furthermore, all properties in Theorem \ref{T:properties} and Proposition \ref{P:more-prop} hold for $t=0$.
We can also see from Theorem \ref{T:monotone} and Theorem \ref{T:limit} that $G_{t}$ interpolates backward the arithmetic mean and harmonic mean: for $0 < s \leq t \leq 1$
\begin{displaymath}
G_{0^{+}} = \mathcal{A} \geq G_{s} \geq G_{t} \geq \mathcal{H} = G_{1}.
\end{displaymath}
\end{remark}

The matrix power mean $P_{t}$ for $t \in [-1,1]$ interpolates the weighted arithmetic, Cartan, and harmonic means. So it is an interesting question to find the relationship between new multivariable mean $G_{t}$ and matrix power mean.

\begin{theorem} \label{T:G-Power-ineq}
Let $\mathbb{A} = (A_{1}, \ldots, A_{n}) \in \mathbb{P}_{m}^{n}$, and let $\omega = (w_{1}, \dots, w_{n}) \in \Delta_{n}$. Then for $t \in [0,1]$
\begin{equation} \label{E:G-Power-ineq}
G_{t}(\omega; \mathbb{A}) \geq P_{-t}(\omega; \mathbb{A}),
\end{equation}
where $P_{-t}(\omega; \mathbb{A}) := P_{t}(\omega; \mathbb{A}^{-1})^{-1}$.
\end{theorem}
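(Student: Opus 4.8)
The plan is to realise $P_{-t}(\omega;\mathbb{A})$ as the attracting fixed point of a monotone self-map $\psi$ of $\mathbb{P}_m$, to show by a single use of the arithmetic--geometric mean inequality that $X:=G_{t}(\omega;\mathbb{A})$ satisfies $X\ge\psi(X)$, and then to bootstrap this one-point comparison up to the fixed point of $\psi$ using monotonicity, exactly in the style of the proofs of Theorem \ref{T:properties}(4) and Theorem \ref{T:monotone}.

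First I would set up the fixed-point description of $P_{-t}$. For $t\in(0,1]$ the power mean $P_{t}(\omega;\mathbb{B})$ is the unique positive definite solution of $Y=\sum_{i}w_{i}\,Y\#_{t}B_{i}$, and by \cite{LL14,LP} the associated map is a strict contraction for the Thompson metric, so its iterates converge to $P_{t}(\omega;\mathbb{B})$ from any seed. Taking $\mathbb{B}=\mathbb{A}^{-1}$, using $(Z\#_{t}A_{i})^{-1}=Z^{-1}\#_{t}A_{i}^{-1}$ and the inversion invariance of $d$ in Lemma \ref{L:Thompson}(1), I would check that $P_{-t}(\omega;\mathbb{A})=P_{t}(\omega;\mathbb{A}^{-1})^{-1}$ is the unique positive definite solution of
\[
Z=\Big[\sum_{i=1}^{n}w_{i}\,(Z\#_{t}A_{i})^{-1}\Big]^{-1}=:\psi(Z),
\]
and that $\psi^{k}(Z)=F^{k}(Z^{-1})^{-1}\to P_{-t}(\omega;\mathbb{A})$ for every $Z\in\mathbb{P}_{m}$, where $F$ is the power mean map for $\mathbb{A}^{-1}$. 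I would also record that $\psi$ is monotone increasing, being $Z\mapsto Z\#_{t}A_{i}$ (order preserving) followed by an even number of order reversals $W\mapsto W^{-1}$.

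Next I would fix $t\in(0,1)$ — the endpoint $t=1$ is the trivial equality $\mathcal{H}=\mathcal{H}$, and $t=0$ is the inequality $\mathcal{A}\ge\Lambda$ already contained in \eqref{E:P-mono} — and compare the two defining equations at the single point $X=G_{t}(\omega;\mathbb{A})$. Since $(1-t)X+tA_{i}\ge X\#_{t}A_{i}$ by the two-variable arithmetic--geometric mean inequality, inverting (antitone), summing against $\omega$, and inverting once more gives $X\ge\psi(X)$. Applying the monotone map $\psi$ repeatedly produces a decreasing chain $X\ge\psi(X)\ge\psi^{2}(X)\ge\cdots$, bounded below by $\mathcal{H}(\omega;\mathbb{A})$ via Theorem \ref{T:properties}(8); passing to the limit and invoking $\psi^{k}(X)\to P_{-t}(\omega;\mathbb{A})$ yields \eqref{E:G-Power-ineq}.

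The main obstacle I anticipate is entirely in the first step: turning the identity $P_{-t}(\omega;\mathbb{A})=P_{t}(\omega;\mathbb{A}^{-1})^{-1}$ into the clean fixed-point equation $Z=\psi(Z)$, and transporting the strict-contraction and convergence properties of the power mean map through the inversion $Z\mapsto Z^{-1}$. Once $\psi$ and its attracting property are in hand, the remainder is a one-line application of the antitonicity of $W\mapsto W^{-1}$ together with the arithmetic--geometric mean inequality, and no new estimates are needed.
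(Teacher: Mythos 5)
Your proposal is correct and follows essentially the same route as the paper: the paper sets $X=G_{t}(\omega;\mathbb{A})^{-1}$, uses the geometric--harmonic mean inequality to get $X\le g(X)$ for the power mean map $g(Z)=\sum_i w_i\,Z\#_t A_i^{-1}$ of $\mathbb{A}^{-1}$, and iterates up to $P_t(\omega;\mathbb{A}^{-1})$ before inverting, which is exactly your argument conjugated by $Z\mapsto Z^{-1}$ (your map $\psi$ is $g$ transported through inversion, and your AM--GM step is the paper's GM--HM step). The endpoint cases and the monotone-iteration-to-the-fixed-point mechanism are handled identically.
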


\begin{proof}
Since $G_{t}(\omega; \mathbb{A}) \to \mathcal{A}(\omega; \mathbb{A})$ as $t \to 0^{+}$ by Theorem \ref{T:limit} and $P_{-t}(\omega; \mathbb{A}) \to \Lambda(\omega; \mathbb{A})$, the inequality \eqref{E:G-Power-ineq} holds when $t = 0$.

Let $X = G_{t}(\omega; \mathbb{A})^{-1}$ for $t \in (0,1]$. Then $X^{-1} = G_{t}(\omega; \mathbb{A})$, and by Proposition \ref{P:fixed}
\begin{displaymath}
X^{-1} = \left[ \sum_{i=1}^{n} w_{i} [ (1-t) X^{-1} + t A_{i} ]^{-1} \right]^{-1}.
\end{displaymath}
By the geometric-harmonic mean inequality
\begin{displaymath}
X = \sum_{i=1}^{n} w_{i} [ (1-t) X^{-1} + t A_{i} ]^{-1} \leq \sum_{i=1}^{n} w_{i} X \#_{t} A_{i}^{-1} =: g(X).
\end{displaymath}
Since the map $g: \mathbb{P}_{m} \to \mathbb{P}_{m}$ is monotone increasing, $X \leq g(X) \leq \cdots \leq g^{k}(X)$ for all $k \geq 1$. Taking limit as $k \to \infty$ yields
\begin{displaymath}
X \leq \lim_{k \to \infty} g^{k}(X) = P_{t}(\omega; \mathbb{A}^{-1}).
\end{displaymath}
Therefore, $X^{-1} = G_{t}(\omega; \mathbb{A}) \geq P_{t}(\omega; \mathbb{A}^{-1})^{-1} = P_{-t}(\omega; \mathbb{A})$.
\end{proof}


\begin{remark}
Taking limit as $t \to 0^{+}$ in \eqref{E:G-Power-ineq} and applying Theorem \ref{T:limit} we obtain the arithmetic-Cartan mean inequality:
\begin{displaymath}
\Lambda(\omega; \mathbb{A}) \leq \sum_{i=1}^{n} w_{i} A_{i}.
\end{displaymath}
\end{remark}

We provide another lower bound of $G_{t}$ and upper bound of $G_{1-t}$ for $t \in [0,1]$ by using Jensen-type inequalities in \cite{HP}: for a bounded linear operator $Z$ such that $Z^{-1}$ is a contraction
\begin{equation} \label{E:Jensen}
(Z^{*} A Z)^{t} \leq Z^{*} A^{t} Z.
\end{equation}

\begin{proposition} \label{P:bounds}
Let $\mathbb{A} = (A_{1}, \ldots, A_{n}) \in \mathbb{P}_{m}^{n}$, and let $\omega = (w_{1}, \dots, w_{n}) \in \Delta_{n}$. Then for $t \in [0,1]$
\begin{equation} \label{E:lower-bound}
G_{t}(\omega; \mathbb{A}) \geq \lambda_{\min}^{1-t} \left( \sum_{i=1}^{n} w_{i} A_{i}^{-t} \right)^{-1},
\end{equation}
and
\begin{equation} \label{E:upper-bound}
G_{1-t}(\omega; \mathbb{A}) \leq \lambda_{\max}^{1-t} \sum_{i=1}^{n} w_{i} A_{i}^{t},
\end{equation}
where $\lambda_{\min} := \min \{ \lambda_{m}(A_{i}): 1 \leq i \leq n \}$ and $\lambda_{\max} := \max \{ \lambda_{1}(A_{i}): 1 \leq i \leq n \}$.
\end{proposition}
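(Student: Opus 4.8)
The plan is to establish the lower bound \eqref{E:lower-bound} first and then obtain \eqref{E:upper-bound} from it by self-duality. Write $X = G_{t}(\omega; \mathbb{A})$ and assume $t \in (0,1)$; the endpoints $t=0,1$ reduce to the elementary estimates $\lambda_{\min} I \leq \mathcal{H}, \mathcal{A} \leq \lambda_{\max} I$ that are already contained in Theorem \ref{T:properties} (8).

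First I would record a uniform scalar lower bound for $X$. Since $A_{i} \geq \lambda_{m}(A_{i}) I \geq \lambda_{\min} I$ for every $i$, monotonicity and idempotency (Theorem \ref{T:properties} (1), (4)) — or, equivalently, Theorem \ref{T:properties} (8) together with $\mathcal{H}(\omega; \mathbb{A}) \geq \lambda_{\min} I$ — give $X \geq \lambda_{\min} I$. Consequently $Z := \lambda_{\min}^{-1/2} X^{1/2}$ is a positive invertible operator whose inverse $Z^{-1} = \lambda_{\min}^{1/2} X^{-1/2}$ satisfies $\Vert Z^{-1} \Vert = \lambda_{\min}^{1/2} \lambda_{m}(X)^{-1/2} \leq 1$; that is, $Z^{-1}$ is a contraction, which is precisely the hypothesis under which the Jensen-type inequality \eqref{E:Jensen} may be invoked for the operator $Z$.

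Next I would rewrite the defining equation. By Remark \ref{R:alternative}, with $B_{i} := X^{-1/2} A_{i} X^{-1/2}$ one has $I = \sum_{i} w_{i} [(1-t) I + t B_{i}]^{-1}$. The operator arithmetic--geometric mean inequality gives $(1-t) I + t B_{i} \geq I \#_{t} B_{i} = B_{i}^{t}$, hence $[(1-t) I + t B_{i}]^{-1} \leq B_{i}^{-t} = (X^{1/2} A_{i}^{-1} X^{1/2})^{t}$, and therefore
\begin{displaymath}
\sum_{i=1}^{n} w_{i} (X^{1/2} A_{i}^{-1} X^{1/2})^{t} \geq I.
\end{displaymath}
Now $X^{1/2} A_{i}^{-1} X^{1/2} = \lambda_{\min}\, Z^{*} A_{i}^{-1} Z$, so applying \eqref{E:Jensen} term by term (exponent $t \in [0,1]$) yields $(X^{1/2} A_{i}^{-1} X^{1/2})^{t} = \lambda_{\min}^{t} (Z^{*} A_{i}^{-1} Z)^{t} \leq \lambda_{\min}^{t} Z^{*} A_{i}^{-t} Z$. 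Summing and using the identity $Z^{*}(\cdot) Z = \lambda_{\min}^{-1} X^{1/2} (\cdot) X^{1/2}$ gives $I \leq \lambda_{\min}^{t-1} X^{1/2} \big( \sum_{i} w_{i} A_{i}^{-t} \big) X^{1/2}$; conjugating by $X^{-1/2}$ and inverting produces $X \geq \lambda_{\min}^{1-t} \big( \sum_{i} w_{i} A_{i}^{-t} \big)^{-1}$, which is \eqref{E:lower-bound}. For the upper bound, I would apply \eqref{E:lower-bound} to $\mathbb{A}^{-1} = (A_{1}^{-1}, \dots, A_{n}^{-1})$: since $\min_{i} \lambda_{m}(A_{i}^{-1}) = (\max_{i} \lambda_{1}(A_{i}))^{-1} = \lambda_{\max}^{-1}$ and $(A_{i}^{-1})^{-t} = A_{i}^{t}$, this gives $G_{t}(\omega; \mathbb{A}^{-1}) \geq \lambda_{\max}^{-(1-t)} \big( \sum_{i} w_{i} A_{i}^{t} \big)^{-1}$; by self-duality (Theorem \ref{T:properties} (7)) we have $G_{1-t}(\omega; \mathbb{A}) = G_{t}(\omega; \mathbb{A}^{-1})^{-1}$, and taking inverses yields \eqref{E:upper-bound}.

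The only genuinely delicate point is the normalization in the second paragraph: the inequality \eqref{E:Jensen} is available only for operators whose inverse is a contraction, so one cannot apply it to $X^{1/2}$ directly and must first rescale $X$ by $\lambda_{\min}$ — and this is exactly where the factor $\lambda_{\min}^{1-t}$ (and, after dualizing, $\lambda_{\max}^{1-t}$) enters. Everything else is bookkeeping with the operator arithmetic--geometric mean inequality, the conjugation identity for $Z$, and order-reversal under inversion.
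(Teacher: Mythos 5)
Your proof is correct and rests on the same two ingredients as the paper's own argument: the arithmetic--geometric mean inequality to reduce the fixed-point equation to $I \leq \sum_{i} w_{i} (X^{1/2} A_{i}^{-1} X^{1/2})^{t}$, followed by the Jensen-type inequality \eqref{E:Jensen} after a $\lambda_{\min}$-rescaling that makes the relevant inverse a contraction. The only organizational differences are that you rescale the conjugating operator $Z = \lambda_{\min}^{-1/2} X^{1/2}$ directly (using $X \geq \lambda_{\min} I$) rather than normalizing the data $\hat{A}_{i} = \lambda_{\min}^{-1} A_{i}$ and invoking homogeneity as the paper does, and that you obtain \eqref{E:upper-bound} by dualizing \eqref{E:lower-bound} through Theorem \ref{T:properties} (7) instead of running the parallel geometric--harmonic argument a second time --- a modest but genuine shortening.
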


\begin{proof}
Since $\lambda_{\min} I \leq A_{i} \leq \lambda_{\max} I$ for all $i$, we obtain $\displaystyle \lambda_{\min} I \leq \sum_{i=1}^{n} w_{i} A_{i} \leq \lambda_{\max} I$ so \eqref{E:lower-bound} and \eqref{E:upper-bound} hold when $t = 0$ and $t = 1$ by Theorem \ref{T:limit} and the arithmetic-harmonic mean inequality.

\begin{itemize}
  \item[(i)] Assume that $X = G_{t}(\omega; \mathbb{A}) \geq I$ for $t \in (0,1)$. Then by the arithmetic-geometric mean inequality and the self-duality of metric geometric mean
  \begin{displaymath}
  X^{-1} = \sum_{i=1}^{n} w_{i} ((1-t) X + t A_{i})^{-1} \leq \sum_{i=1}^{n} w_{i} (X \#_{t} A_{i})^{-1} = \sum_{i=1}^{n} w_{i} (X^{-1} \#_{t} A_{i}^{-1}).
  \end{displaymath}
  Taking congruence transformation by $X^{1/2}$ yields $\displaystyle I \leq \sum_{i=1}^{n} w_{i} (X^{1/2} A_{i}^{-1} X^{1/2})^{t}$. By Jensen-type inequality \eqref{E:Jensen}, we have
  \begin{displaymath}
  (X^{1/2} A_{i}^{-1} X^{1/2})^{t} \leq X^{1/2} A_{i}^{-t} X^{1/2}.
  \end{displaymath}
  Thus, $\displaystyle I \leq \sum_{i=1}^{n} w_{i} X^{1/2} A_{i}^{-t} X^{1/2}$, and hence, $\displaystyle X^{-1} \leq \sum_{i=1}^{n} w_{i} A_{i}^{-t}$. That is,
  \begin{equation} \label{E:lower}
  G_{t}(\omega; \mathbb{A}) \geq \left( \sum_{i=1}^{n} w_{i} A_{i}^{-t} \right)^{-1}.
  \end{equation}

  Since $\hat{A}_{i} := \lambda_{\min}^{-1} A_{i} \geq I$ for all $i$, we have from Theorem \ref{T:properties} (8)
  \begin{displaymath}
  G_{t}(\omega; \hat{A}_{1}, \dots, \hat{A}_{n}) \geq \left( \sum_{i=1}^{n} w_{i} \hat{A}_{i}^{-1} \right)^{-1} \geq I
  \end{displaymath}
  Applying \eqref{E:lower} with $\hat{A}_{i}$ for all $i$ and using Theorem \ref{T:properties} (2)
  \begin{displaymath}
  \lambda_{\min}^{-1} G_{t}(\omega; \mathbb{A}) = G_{t}(\omega; \hat{A}_{1}, \dots, \hat{A}_{n}) \geq \left( \sum_{i=1}^{n} w_{i} \hat{A}_{i}^{-t} \right)^{-1} = \lambda_{\min}^{-t} \left( \sum_{i=1}^{n} w_{i} A_{i}^{-t} \right)^{-1}.
  \end{displaymath}
  Simplifying it, we obtain \eqref{E:lower-bound}.

  \item[(ii)] Assume that $Y = G_{1-t}(\omega; \mathbb{A}) \leq I$ for $t \in (0,1)$. Then by Theorem \ref{T:properties} (7) and the geometric-harmonic mean inequality
  \begin{displaymath}
  Y = \sum_{i=1}^{n} w_{i} ((1-t) Y^{-1} + t A_{i}^{-1})^{-1} \leq \sum_{i=1}^{n} w_{i} (Y \#_{t} A_{i}).
  \end{displaymath}
  Taking congruence transformation by $Y^{-1/2}$ and applying \eqref{E:Jensen} yield \begin{displaymath}
  I \leq \sum_{i=1}^{n} w_{i} (Y^{-1/2} A_{i} Y^{-1/2})^{t} \leq \sum_{i=1}^{n} w_{i} Y^{-1/2} A_{i}^{t} Y^{-1/2}.
  \end{displaymath}
  Thus,
  \begin{equation} \label{E:upper}
  G_{1-t}(\omega; \mathbb{A}) = Y \leq \sum_{i=1}^{n} w_{i} A_{i}^{t}
  \end{equation}

  Since $\bar{A}_{i} := \lambda_{\max}^{-1} A_{i} \leq I$ for all $i$, we have from Theorem \ref{T:properties} (8)
  \begin{displaymath}
  G_{1-t}(\omega; \bar{A}_{1}, \dots, \bar{A}_{n}) \leq \sum_{i=1}^{n} w_{i} \bar{A}_{i} \leq I.
  \end{displaymath}
  Applying \eqref{E:upper} with $\bar{A}_{i}$ for all $i$ and using Theorem \ref{T:properties} (2) we have
  \begin{displaymath}
  \lambda_{\max}^{-1} G_{1-t}(\omega; \mathbb{A}) = G_{1-t}(\omega; \bar{A}_{1}, \dots, \bar{A}_{n}) \leq \sum_{i=1}^{n} w_{i} \bar{A}_{i}^{t} = \lambda_{\max}^{-t} \sum_{i=1}^{n} w_{i} A_{i}^{t}.
  \end{displaymath}
  Simplifying it, we obtain \eqref{E:upper-bound}.
\end{itemize}
\end{proof}

By the proofs (i) and (ii) of Proposition \ref{P:bounds} we obtain the following.
\begin{corollary}
Let $\mathbb{A} = (A_{1}, \ldots, A_{n}) \in \mathbb{P}_{m}^{n}$, and let $\omega = (w_{1}, \dots, w_{n}) \in \Delta_{n}$. For $t \in [0,1]$
\begin{itemize}
  \item[(i)] if $G_{t}(\omega; \mathbb{A}) \geq I$ then $\displaystyle G_{t}(\omega; \mathbb{A}) \geq \left( \sum_{i=1}^{n} w_{i} A_{i}^{-t} \right)^{-1}$, and
  \item[(ii)] if $G_{t}(\omega; \mathbb{A}) \leq I$ then $\displaystyle G_{t}(\omega; \mathbb{A}) \leq \sum_{i=1}^{n} w_{i} A_{i}^{1-t}$.
\end{itemize}
\end{corollary}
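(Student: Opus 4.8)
The plan is to read off the Corollary directly from the internal structure of the proof of Proposition \ref{P:bounds}, since all the real work is already done there. In part (i) of that proof, the hypothesis $X = G_{t}(\omega;\mathbb{A}) \geq I$ (for $t \in (0,1)$) is used, via the arithmetic--geometric mean inequality, the self-duality of the two-variable geometric mean, a congruence by $X^{1/2}$, and the Jensen-type inequality \eqref{E:Jensen} (applicable precisely because $(X^{1/2})^{-1}$ being a contraction is the same as $X \geq I$), to derive exactly inequality \eqref{E:lower}, i.e. $G_{t}(\omega;\mathbb{A}) \geq \left(\sum_{i=1}^{n} w_{i} A_{i}^{-t}\right)^{-1}$. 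The subsequent rescaling by $\lambda_{\min}$ in Proposition \ref{P:bounds} is only needed to remove the assumption $X \geq I$; here we keep that assumption, so \eqref{E:lower} is already statement (i) for $t \in (0,1)$.

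For the endpoint cases $t = 0$ and $t = 1$ I would observe that $G_{0}(\omega;\mathbb{A}) = \mathcal{A}(\omega;\mathbb{A}) = \sum_{i} w_{i} A_{i}$ and $G_{1}(\omega;\mathbb{A}) = \mathcal{H}(\omega;\mathbb{A}) = \left(\sum_{i} w_{i} A_{i}^{-1}\right)^{-1}$; in each case the claimed conclusion of (i) either coincides with the hypothesis (the case $t=1$, and the case $t=0$ where the right-hand side is $I$), so nothing needs to be proved.

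Statement (ii) is obtained identically from inequality \eqref{E:upper} in part (ii) of the proof of Proposition \ref{P:bounds}: under the hypothesis $Y = G_{1-t}(\omega;\mathbb{A}) \leq I$, the self-duality in Theorem \ref{T:properties} (7), the geometric--harmonic mean inequality, a congruence by $Y^{-1/2}$, and \eqref{E:Jensen} give $G_{1-t}(\omega;\mathbb{A}) \leq \sum_{i} w_{i} A_{i}^{t}$. Reindexing with $s = 1-t$ turns this into: if $G_{s}(\omega;\mathbb{A}) \leq I$ then $G_{s}(\omega;\mathbb{A}) \leq \sum_{i} w_{i} A_{i}^{1-s}$, which is (ii) after renaming $s$ back to $t$, and the endpoints $t \in \{0,1\}$ are again trivial. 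I do not expect a genuine obstacle: the only thing to be careful about is recording that the $\lambda_{\min}$/$\lambda_{\max}$-rescaling step in Proposition \ref{P:bounds} was exactly the device for dropping the normalization hypotheses, so when $G_{t}$ already lies above or below $I$ those intermediate inequalities are the statement we want.
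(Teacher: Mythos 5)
Your proposal is correct and is exactly what the paper intends: the corollary is stated as following "by the proofs (i) and (ii) of Proposition \ref{P:bounds}", and you correctly identify that inequalities \eqref{E:lower} and \eqref{E:upper} are derived there under precisely the hypotheses $G_{t}(\omega;\mathbb{A}) \geq I$ and $G_{1-t}(\omega;\mathbb{A}) \leq I$, with the $\lambda_{\min}/\lambda_{\max}$ rescaling serving only to remove those hypotheses. Your handling of the reindexing $s = 1-t$ for part (ii) and of the trivial endpoints $t \in \{0,1\}$ is also accurate.
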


\begin{remark}
By Theorem \ref{T:properties} (7) and Theorem \ref{T:limit}
\begin{displaymath}
G_{1-t}(\omega; \mathbb{A}) = G_{t}(\omega; \mathbb{A}^{-1})^{-1}
\end{displaymath}
for $t \in [0,1]$.
Then one can see that $G_{1-t}(\omega; \mathbb{A})$ for $t \in (0,1]$ is the unique positive definite solution $X$ of
\begin{displaymath}
X = \sum_{i=1}^{n} w_{i} ((1-t) X^{-1} + t A_{i}^{-1})^{-1}.
\end{displaymath}
For $0 \leq s \leq t \leq 1$, Theorem \ref{T:monotone} yields
\begin{displaymath}
G_{1-t}(\omega; \mathbb{A}) = G_{t}(\omega; \mathbb{A}^{-1})^{-1} \geq G_{s}(\omega; \mathbb{A}^{-1})^{-1} = G_{1-s}(\omega; \mathbb{A}).
\end{displaymath}
Furthermore, one can see from \eqref{E:G-Power-ineq} that for $t \in [0,1]$
\begin{displaymath}
G_{1-t}(\omega; \mathbb{A}) = G_{t}(\omega; \mathbb{A}^{-1})^{-1} \leq P_{t}(\omega; \mathbb{A})
\end{displaymath}
\end{remark}


\section{Quantum divergence and barycenters}

For any $\alpha \in (-1,1)$ and $A, B \in \mathbb{P}_{m}$, the \emph{log-determinant $\alpha$-divergence} $D_{\alpha} (A | B)$ is defined by
\begin{equation} \label{E:log-det divergence}
D_{\alpha}(A | B) := \frac{4}{1 - \alpha^2} \log \frac{\det \left( \frac{1 - \alpha}{2} A + \frac{1 + \alpha}{2} B \right)}{(\det A)^{(1 - \alpha)/2} (\det B)^{(1 + \alpha)/2}}.
\end{equation}
One can see from \eqref{E:log-det divergence} that
\begin{displaymath}
\begin{split}
D_{-1} (A | B) & := \lim_{\alpha \to -1} D_{\alpha} (A | B) = \tr (A^{-1} B - I) - \log \det (A^{-1} B), \\
D_{1} (A | B) & := \lim_{\alpha \to 1} D_{\alpha} (A | B) = \tr (B^{-1} A - I) - \log \det (B^{-1} A).
\end{split}
\end{displaymath}
This is a one-parameter family of divergences which is related with the Stein's loss.
See \cite{CM} for more information.

\begin{definition} \label{D:q-divergence}
A smooth function $\Phi: \mathbb{P}_{m} \times \mathbb{P}_{m} \to [0, \infty)$ is a quantum divergence if it satisfies the following conditions:
\begin{enumerate}
\item $\Phi(A,B) \geq 0$, and $\Phi(A,B) = 0$ if and only if $A=B$.

\item The first derivative $D \Phi$ with respect to the second variable vanishes on the diagonal. That is,
\begin{displaymath}
\left. D \Phi(A, B) \right|_{B=A} = 0.
\end{displaymath}

\item The second derivative $D^{2} \Phi$ is non-negative on the diagonal. That is,
\begin{displaymath}
\left. \frac{\partial^2 \Phi}{\partial B^2} (A, B) \right|_{B=A} (X,X) \geq 0, \quad \forall X \in \mathbb{H}_m.
\end{displaymath}
\end{enumerate}
\end{definition}

\begin{theorem}
For any $\alpha \in (-1,1)$, the log-determinant $\alpha$-divergence $D_{\alpha} (A | B): \mathbb{P}_{m} \times \mathbb{P}_{m} \rightarrow [0,\infty)$ is  a quantum divergence.
\end{theorem}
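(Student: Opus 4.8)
The plan is to verify the three conditions in Definition \ref{D:q-divergence} directly for $\Phi(A,B) = D_\alpha(A\,|\,B)$ with $\alpha \in (-1,1)$ fixed. Throughout I would treat $A$ as fixed and regard $D_\alpha(A\,|\,\cdot)$ as a smooth function of $B \in \mathbb{P}_m$; smoothness is clear since $\det$ is a polynomial, $B \mapsto \frac{1-\alpha}{2}A + \frac{1+\alpha}{2}B$ stays in $\mathbb{P}_m$, and $\log$ is smooth on positive reals. It is convenient to write $M(B) := \frac{1-\alpha}{2}A + \frac{1+\alpha}{2}B$, so that
\begin{displaymath}
D_\alpha(A\,|\,B) = \frac{4}{1-\alpha^2}\Big[ \log\det M(B) - \tfrac{1-\alpha}{2}\log\det A - \tfrac{1+\alpha}{2}\log\det B \Big].
\end{displaymath}

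For condition (1), the key observation is that $M(B) = \frac{1-\alpha}{2}A + \frac{1+\alpha}{2}B$ is a weighted arithmetic mean of $A$ and $B$ with weights $\frac{1-\alpha}{2}, \frac{1+\alpha}{2} \in (0,1)$ summing to $1$, while the denominator is the corresponding weighted geometric mean of the scalars $\det A, \det B$. The inequality $D_\alpha \geq 0$ is therefore precisely the statement that $\log\det$ is concave on $\mathbb{P}_m$ (equivalently the Ky Fan / Minkowski determinant inequality $\det(\lambda A + (1-\lambda)B) \geq (\det A)^\lambda (\det B)^{1-\lambda}$), combined with the normalizing constant $\frac{4}{1-\alpha^2} > 0$. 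Strict concavity of $\log\det$ gives equality if and only if $A = B$. I would cite the standard concavity of $\log\det$ (e.g. from \cite{Bh}) rather than reprove it.

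For conditions (2) and (3) I would compute the first and second Fréchet derivatives of $B \mapsto D_\alpha(A\,|\,B)$ using the standard formulas $D(\log\det)(C)[X] = \tr(C^{-1}X)$ and $D^2(\log\det)(C)[X,X] = -\tr(C^{-1}XC^{-1}X)$. Since $M(B)$ is affine in $B$ with $DM(B)[X] = \frac{1+\alpha}{2}X$, the chain rule gives
\begin{displaymath}
D_B D_\alpha(A\,|\,B)[X] = \frac{4}{1-\alpha^2}\Big[ \tfrac{1+\alpha}{2}\tr(M(B)^{-1}X) - \tfrac{1+\alpha}{2}\tr(B^{-1}X)\Big] = \frac{2}{1-\alpha}\,\tr\big((M(B)^{-1} - B^{-1})X\big).
\end{displaymath}
At $B = A$ we have $M(A) = A$, so the bracket vanishes identically in $X$, giving condition (2). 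Differentiating once more,
\begin{displaymath}
D_B^2 D_\alpha(A\,|\,B)[X,X] = \frac{2}{1-\alpha}\Big[ -\tfrac{1+\alpha}{2}\tr(M(B)^{-1}XM(B)^{-1}X) + \tr(B^{-1}XB^{-1}X)\Big],
\end{displaymath}
and evaluating at $B = A$ (so $M(A) = A$) collapses this to $\frac{2}{1-\alpha}\big(1 - \frac{1+\alpha}{2}\big)\tr(A^{-1}XA^{-1}X) = \tr(A^{-1}XA^{-1}X) = \|A^{-1/2}XA^{-1/2}\|_2^2 \geq 0$ for Hermitian $X$, which is condition (3).

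I do not anticipate a serious obstacle here; the only things to be careful about are (a) confirming $M(B) \in \mathbb{P}_m$ for all $B \in \mathbb{P}_m$ so the logarithm and inverses are well-defined and everything is genuinely smooth, and (b) keeping the bookkeeping of the constant $\frac{4}{1-\alpha^2}$ and the factors $\frac{1\pm\alpha}{2}$ straight so that the cancellations at $B=A$ come out cleanly. The mild subtlety worth a sentence is that the problem statement's condition (3) only requires non-negativity of $D^2\Phi$ on the diagonal, which is exactly what the computation yields; one gets strict positivity for $X \neq 0$ as a bonus, matching the strictness in condition (1). If the paper prefers, I could also derive (2) and (3) as infinitesimal consequences of the nonnegativity in (1) together with $D_\alpha(A\,|\,A)=0$, but the direct derivative computation is cleaner and self-contained.
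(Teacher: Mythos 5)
Your proposal is correct and follows essentially the same route as the paper: the first and second Fr\'{e}chet derivatives you compute (via $D(\log\det)(C)[X]=\tr(C^{-1}X)$ and the affine dependence of $M(B)$ on $B$) are exactly the expressions in the paper's proof, and the evaluation at $B=A$ collapses to $\tr\bigl((A^{-1/2}XA^{-1/2})^{2}\bigr)\geq 0$ in both cases. The only difference is cosmetic: for condition (1) the paper cites \cite[Proposition 3.5]{CM}, whereas you re-derive it from the strict concavity of $\log\det$ (the Minkowski determinant inequality), which is a perfectly valid, self-contained substitute.
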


\begin{proof}
The first condition of Definition \ref{D:q-divergence} for the log-determinant $\alpha$-divergence $D_{\alpha}$ has been proved in \cite[Proposition 3.5]{CM}.

\noindent
(2) For $X \in \mathbb{H}_m$ we have
\begin{displaymath}
\frac{\partial D_{\alpha} (A | B)}{\partial B} (X) = \frac{2}{1 - \alpha} \tr \left[ (- B^{-1} + (\frac{1 - \alpha}{2} A + \frac{1 + \alpha}{2} B)^{-1}) X \right].
\end{displaymath}
When $B=A$, we get $\displaystyle \left. \frac{\partial D_{\alpha} (A | B)}{\partial B} \right|_{B=A} (X) = 0$.

\noindent
(3) For $X \in \mathbb{H}_m$ we have
\begin{displaymath}
\begin{split}
& \frac{\partial^2 D_{\alpha} (A | B)}{\partial B^2} (X,X) \\
& = \frac{2}{1 - \alpha} \tr \left[ (B^{-1}X B^{-1} - \frac{1 + \alpha}{2}(\frac{ 1 -\alpha}{2} A + \frac{1 + \alpha}{2} B)^{-1} X (\frac{1 - \alpha}{2} A + \frac{1 + \alpha}{2}B)^{-1}) X \right].
\end{split}
\end{displaymath}
When $B=A$, we get
\begin{displaymath}
\left. {\frac{\partial^2 D_{\alpha} (A | B)}{\partial B^2} }%
\right|_{%
\stackunder[1pt]{$\scriptscriptstyle B=A$}{}} (X,X) = \tr(A^{-1} X A^{-1} X) = \tr((A^{-1/2} X A^{-1/2})^2) \geq 0.
\end{displaymath}
Hence, $ D_{\alpha} (A | B)$  is a quantum divergence.
\end{proof}

\begin{lemma} \label{L:divergence}
Let $A, B \in \mathbb{P}_{m}$ and $\alpha \in [-1,1]$. Then
\begin{itemize}
  \item[(i)] $D_{\alpha}(P A Q | P B Q) = D_{\alpha}(A | B)$ for any $P, Q \in \mathrm{GL}_{m}$,
  \item[(ii)] $D_{\alpha}(A^{-1} | B^{-1}) = D_{\alpha}(B | A)$, and
  \item[(iii)] $D_{\alpha}(A^{t} | B^{t}) \leq t \, D_{\alpha}(A | B)$ for any $t \in [0,1]$.
\end{itemize}
\end{lemma}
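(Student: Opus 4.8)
\emph{Proof idea.} The plan is to set $p=\frac{1-\alpha}{2}$ and $q=\frac{1+\alpha}{2}$, so that $p+q=1$ and $\frac{4}{1-\alpha^{2}}=\frac{1}{pq}$, and to rewrite \eqref{E:log-det divergence} in the symmetric form
\begin{displaymath}
D_{\alpha}(A|B)=\frac{1}{pq}\bigl[\log\det(pA+qB)-p\log\det A-q\log\det B\bigr].
\end{displaymath}
In this form, items (i) and (ii) are pure determinant bookkeeping, while (iii) collapses to a single operator inequality.

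For (i) I would use multiplicativity of the determinant: since $p\,PAQ+q\,PBQ=P(pA+qB)Q$, we get $\det(p\,PAQ+q\,PBQ)=\det P\,\det Q\,\det(pA+qB)$, whereas $(\det PAQ)^{p}(\det PBQ)^{q}=(\det P\,\det Q)^{p+q}(\det A)^{p}(\det B)^{q}=\det P\,\det Q\,(\det A)^{p}(\det B)^{q}$ because $p+q=1$; the factor $\det P\,\det Q$ cancels in the ratio, leaving $D_{\alpha}(A|B)$. For (ii) I would exploit the factorization $pA^{-1}+qB^{-1}=A^{-1}(qA+pB)B^{-1}$, which gives $\det(pA^{-1}+qB^{-1})=\det(qA+pB)/(\det A\,\det B)$; combining this with $(\det A^{-1})^{p}(\det B^{-1})^{q}=(\det A)^{-p}(\det B)^{-q}$ and $1-p=q$, $1-q=p$, the ratio defining $D_{\alpha}(A^{-1}|B^{-1})$ becomes $\det(qA+pB)/\bigl((\det A)^{q}(\det B)^{p}\bigr)$, which is exactly the ratio defining $D_{\alpha}(B|A)$.

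The substantive item is (iii). First I would observe, using $\log\det A^{t}=t\log\det A$, that the terms $-p\log\det A^{t}$ and $-q\log\det B^{t}$ appearing in $D_{\alpha}(A^{t}|B^{t})$ are precisely $t$ times the corresponding terms in $t\,D_{\alpha}(A|B)$, so they cancel; hence the claimed inequality is equivalent to
\begin{displaymath}
\det(pA^{t}+qB^{t})\le\bigl(\det(pA+qB)\bigr)^{t}.
\end{displaymath}
This I would obtain from the operator concavity of $r\mapsto r^{t}$ on $(0,\infty)$ for $t\in[0,1]$, which yields $pA^{t}+qB^{t}\le(pA+qB)^{t}$ in the Löwner order, followed by monotonicity of $\det$ under the Löwner order on $\mathbb{P}_{m}$ together with the identity $\det(C^{t})=(\det C)^{t}$. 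The endpoint cases are immediate: $A^{0}=B^{0}=I$ with $D_{\alpha}(I|I)=0$, and equality holds at $t=1$.

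I do not anticipate a genuine obstacle. The only point that requires attention is spotting the cancellation of the $\log\det A$ and $\log\det B$ contributions in (iii); once that reduction is made, the argument rests entirely on the standard facts that the $t$-power is operator concave for $t\in[0,1]$ and that the determinant is monotone for the Löwner order, and the remaining work in all three parts is routine manipulation of the exponents $p$ and $q$.
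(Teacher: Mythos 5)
Your proposal is correct and follows essentially the same route as the paper: determinant multiplicativity for (i), the factorization $pA^{-1}+qB^{-1}=A^{-1}(qA+pB)B^{-1}$ for (ii), and operator concavity of the $t$-power plus monotonicity of $\det$ for (iii). The only detail worth adding is that for $\alpha=\pm1$ the divergence is defined by a limit, so one proves the three identities for $\alpha\in(-1,1)$ and passes to the limit, as the paper notes.
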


\begin{proof}
Let $A, B \in \mathbb{P}_{m}$. It is enough to show (i), (ii) and (iii) for $\alpha \in (-1,1)$ due to the limit process.
\begin{itemize}
  \item[(i)] Since $\det (X Y) = \det X \det Y$ for any square matrices $X, Y$, we obtain (i) easily.
  \item[(ii)] Since
  \begin{displaymath}
  \begin{split}
  \frac{\det \left( \frac{1 - \alpha}{2} A^{-1} + \frac{1 + \alpha}{2} B^{-1} \right)}{(\det A)^{-(1 - \alpha)/2} (\det B)^{-(1 + \alpha)/2}} & = \frac{\det \left( A^{-1} \left( \frac{1 - \alpha}{2} B + \frac{1 + \alpha}{2} A \right) B^{-1} \right)}{(\det A)^{-(1 - \alpha)/2} (\det B)^{-(1 + \alpha)/2}} \\
  & = \frac{\det \left( \frac{1 - \alpha}{2} B + \frac{1 + \alpha}{2} A \right)}{(\det A)^{(1 + \alpha)/2} (\det B)^{(1 - \alpha)/2}},
  \end{split}
  \end{displaymath}
  we get
  \begin{displaymath}
  D_{\alpha}(A^{-1} | B^{-1}) = \frac{4}{1 - \alpha^2} \log \frac{\det \left( \frac{1 - \alpha}{2} B + \frac{1 + \alpha}{2} A \right)}{(\det B)^{(1 - \alpha)/2} (\det A)^{(1 + \alpha)/2}} = D_{\alpha}(B | A).
  \end{displaymath}
  \item[(iii)] Since the map $X \in \mathbb{P}_{m} \mapsto X^{t}$ is concave for any $t \in [0,1]$,
  \begin{displaymath}
  \frac{1 - \alpha}{2} A^{t} + \frac{1 + \alpha}{2} B^{t} \leq \left( \frac{1 - \alpha}{2} A + \frac{1 + \alpha}{2} B \right)^{t}
  \end{displaymath}
  and so
  \begin{displaymath}
  \begin{split}
  \frac{\det \left( \frac{1 - \alpha}{2} A^{t} + \frac{1 + \alpha}{2} B^{t} \right)}{(\det A)^{t(1 - \alpha)/2} (\det B)^{t(1 + \alpha)/2}} & \leq \frac{\det \left( \frac{1 - \alpha}{2} A + \frac{1 + \alpha}{2} B \right)^{t}}{(\det A)^{t(1 - \alpha)/2} (\det B)^{t(1 + \alpha)/2}} \\
  & = \left[ \frac{\det \left( \frac{1 - \alpha}{2} A + \frac{1 + \alpha}{2} B \right)}{(\det A)^{(1 - \alpha)/2} (\det B)^{(1 + \alpha)/2}} \right]^{t}.
  \end{split}
  \end{displaymath}
  Thus, $D_{\alpha}(A^{t} | B^{t}) \leq t \, D_{\alpha}(A | B)$.
\end{itemize}
\end{proof}

In particular, note from \cite{Sra} that
\begin{equation} \label{E:divergence-metric}
d_{S}(A, B) = \sqrt{D_{0}(A | B)} = 2 \sqrt{\log \det \left( \frac{A + B}{2} \right) - \frac{1}{2} \log \det (A B)}
\end{equation}
is a distance on $\mathbb{P}_{m}$. Since $\log \det A = \tr \log A$ for any $A \in \mathbb{P}_{m}$, we have an alternative expression of $d_{S}(A, B)$ such as
\begin{displaymath}
d_{S}(A, B) = 2 \sqrt{ \tr \left[ \log \left( \frac{A + B}{2} \right) - \frac{\log A + \log B}{2} \right]}.
\end{displaymath}

Unlike the proof of \cite{Sra}, the following properties of the metric $d_{S}$ can be proved more easily by Lemma \ref{L:divergence}.
\begin{theorem}
The metric $d_{S}$ is invariant under congruence transformations and inversion, i.e.,
\begin{displaymath}
d_{S}(S A S^{*}, S B S^{*}) = d_{S}(A, B) = d_{S}(A^{-1}, B^{-1})
\end{displaymath}
for any $S \in \mathrm{GL}_{m}$. Moreover,
\begin{displaymath}
d_{S}(A \#_{t} B, A \#_{t} C) \leq \sqrt{t} d_{S}(B, C)
\end{displaymath}
for any $t \in [0,1]$.
\end{theorem}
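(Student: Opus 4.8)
The plan is to read off all three assertions from Lemma~\ref{L:divergence} and the formula \eqref{E:divergence-metric}, using the one extra elementary observation that $D_{0}$ is symmetric: setting $\alpha = 0$ in \eqref{E:log-det divergence} gives an expression visibly symmetric in its two arguments, so $D_{0}(A|B) = D_{0}(B|A)$ for all $A, B \in \mathbb{P}_{m}$.

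First I would dispose of congruence invariance. Since $d_{S}(A,B) = \sqrt{D_{0}(A|B)}$, applying Lemma~\ref{L:divergence}(i) with $P = S$ and $Q = S^{*}$ gives $D_{0}(SAS^{*}|SBS^{*}) = D_{0}(A|B)$, whence $d_{S}(SAS^{*},SBS^{*}) = d_{S}(A,B)$. For inversion invariance I would combine Lemma~\ref{L:divergence}(ii) with the symmetry of $D_{0}$: $D_{0}(A^{-1}|B^{-1}) = D_{0}(B|A) = D_{0}(A|B)$, so $d_{S}(A^{-1},B^{-1}) = d_{S}(A,B)$.

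The contraction estimate is the only part needing a short computation, and it uses all of Lemma~\ref{L:divergence}. Writing $A\#_{t}B = A^{1/2}(A^{-1/2}BA^{-1/2})^{t}A^{1/2}$ and likewise for $C$, I would first peel off the congruence by $A^{1/2}$ using Lemma~\ref{L:divergence}(i) with $P = Q = A^{-1/2} \in \mathrm{GL}_{m}$ (Hermitian, so $Q = Q^{*}$):
\begin{displaymath}
D_{0}(A\#_{t}B \,|\, A\#_{t}C) = D_{0}\left( (A^{-1/2}BA^{-1/2})^{t} \,|\, (A^{-1/2}CA^{-1/2})^{t} \right).
\end{displaymath}
Next I would apply the power inequality Lemma~\ref{L:divergence}(iii) and then Lemma~\ref{L:divergence}(i) once more with $P = Q = A^{-1/2}$ to obtain
\begin{displaymath}
D_{0}\left( (A^{-1/2}BA^{-1/2})^{t} \,|\, (A^{-1/2}CA^{-1/2})^{t} \right) \le t\, D_{0}(A^{-1/2}BA^{-1/2} \,|\, A^{-1/2}CA^{-1/2}) = t\, D_{0}(B|C).
\end{displaymath}
Chaining the two displays gives $D_{0}(A\#_{t}B \,|\, A\#_{t}C) \le t\, D_{0}(B|C)$, and taking square roots yields $d_{S}(A\#_{t}B, A\#_{t}C) \le \sqrt{t}\, d_{S}(B,C)$.

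Every step is a direct invocation of an already established property, so I anticipate no serious obstacle. The only points to watch are the symmetry $D_{0}(A|B)=D_{0}(B|A)$, which is what makes the inversion statement come out symmetric, and, in the contraction part, keeping track of which power of $A$ (namely $A^{1/2}$ against $A^{-1/2}$) cancels in the definition of $\#_{t}$, so that inequality (iii) is applied to the centered matrices $A^{-1/2}BA^{-1/2}$ and $A^{-1/2}CA^{-1/2}$ rather than to $B$ and $C$ themselves.
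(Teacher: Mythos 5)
Your proposal is correct and follows exactly the route the paper intends: the paper omits the proof but explicitly says the theorem follows from Lemma \ref{L:divergence}, and your argument is the straightforward instantiation of parts (i)--(iii) at $\alpha=0$ together with the evident symmetry of $D_{0}$. All steps check out, including the careful peeling of the $A^{1/2}$ congruence before applying the power inequality (iii).
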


It has been shown in \cite[Proposition 3.10]{CM} that for any $\mathbb{A} = (A_{1}, \ldots, A_{n}) \in \mathbb{P}_{m}^{n}$,
\begin{displaymath}
G(\mathbb{A}) = \underset{X \in \mathbb{P}_{m}}{\arg \min} \, \sum_{i=1}^{n} D_{\alpha}(A_{i} | X)
\end{displaymath}
exists uniquely, and we call it the \emph{right mean}. Indeed, the gradient of the objective function $\displaystyle \phi(X) = \sum_{i=1}^{n} D_{\alpha}(A_{i} | X)$ is
\begin{displaymath}
\nabla \phi(X) = \frac{2}{1 - \alpha} \left[ \sum_{i=1}^{n} \left( \frac{1 - \alpha}{2} A_{i} + \frac{1 + \alpha}{2} X \right)^{-1} - n X^{-1} \right]
\end{displaymath}
and its Hessian is
\begin{displaymath}
\begin{split}
& \nabla^{2} \phi(X) \\
& = \frac{2}{1 - \alpha} \left[ n X^{-1} \otimes X^{-1} - \frac{1 + \alpha}{2} \sum_{i=1}^{n} \left( \frac{1 - \alpha}{2} A_{i} + \frac{1 + \alpha}{2} X \right)^{-1} \otimes \left( \frac{1 - \alpha}{2} A_{i} + \frac{1 + \alpha}{2} X \right)^{-1} \right].
\end{split}
\end{displaymath}
Since $\displaystyle \left( (1 - \alpha) A_{i} + (1 + \alpha) X \right)^{-1} < (1 + \alpha)^{-1} X^{-1}$, we get $\nabla^{2} \phi(X) > 0$ for the local optimality of $\nabla \phi(X) = 0$. So the local minimizer becomes the global minimizer by uniqueness of $X$ satisfying $\nabla \phi(X) = 0$.
Note that $G(\mathbb{A})$ coincides with the unique solution of the following matrix equation
\begin{displaymath}
O = \nabla \phi(X) = \frac{2}{1 - \alpha} \left[ \sum_{i=1}^{n} \left( \frac{1 - \alpha}{2} A_{i} + \frac{1 + \alpha}{2} X \right)^{-1} - n X^{-1} \right].
\end{displaymath}
It is equivalent by setting $t = \frac{1 - \alpha}{2}$ that
\begin{displaymath}
X = \left[ \frac{1}{n} \sum_{i=1}^{n} \left( (1-t) X + t A_{i} \right)^{-1} \right]^{-1}.
\end{displaymath}
Hence, we can see that our mean $G_{t}(\omega; \mathbb{A})$ is a weighted version of $G(\mathbb{A})$, and many interesting properties including the monotonicity and relationship with the matrix power mean have been shown in this paper.

Finding the explicit form of $G_{t}(\omega; \mathbb{A})$ is a challengeable problem.
Remarkably it has been proved in \cite[Theorem 14]{Sra} that $G_{t}(\omega; A, B)$ for $\omega = (1/2, 1/2)$ and $t = 1/2$ is the same as the metric geometric mean of $A, B$:
\begin{equation} \label{E:Sra}
G(A, B) = \underset{X \in \mathbb{P}_{m}}{\arg \min} \, d_{S}^{2}(X, A) + d_{S}^{2}(X, B) = A \# B = G_{1/2}(1/2, 1/2; A, B).
\end{equation}
We provide the explicit form of $G_{t}(\omega; A, B)$ for a special case of $A, B$.
\begin{proposition}
Let $\omega = (w_{1}, w_{2}) \in \Delta_{2}$ and $A, B \in \mathbb{P}_{m}$ such that $(t - w_{1}) A = (w_{2} - t) B$ for either $w_{1} < t < w_{2}$ or $w_{2} < t < w_{1}$. Then
\begin{displaymath}
G_{t}(\omega; A, B) = \sqrt{\frac{t(t - w_{1})}{(1-t)(w_{2} - t)}} A.
\end{displaymath}
\end{proposition}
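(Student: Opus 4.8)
The plan is to exploit the fact that the hypothesis forces $B$ to be a positive scalar multiple of $A$, which collapses the matrix equation \eqref{E:fixed} to a scalar one. In either of the two regimes $w_{1} < t < w_{2}$ or $w_{2} < t < w_{1}$ one has $0 < t < 1$, and $t - w_{1}$ and $w_{2} - t$ are both nonzero of the same sign, so $\displaystyle c := \frac{t - w_{1}}{w_{2} - t} > 0$ and $B = cA$. Rewriting the constraint $c(w_{2} - t) = t - w_{1}$ as
\begin{displaymath}
w_{1} + c\, w_{2} = t(1 + c) \qquad \bigl(\text{equivalently } c\,w_{1} + w_{2} = (1-t)(1+c)\bigr),
\end{displaymath}
I would record this identity for later use. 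By Proposition \ref{P:fixed} the equation \eqref{E:fixed} has a unique positive definite solution, so it suffices to produce one; the natural candidate, by homogeneity, is $X = \lambda A$ with $\lambda > 0$ to be determined.

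Substituting $X = \lambda A$ and $B = cA$ into \eqref{E:fixed}, each of $(1-t)X + tA$ and $(1-t)X + tB$ is a positive scalar multiple of $A$, namely $\big((1-t)\lambda + t\big)A$ and $\big((1-t)\lambda + tc\big)A$, so the equation reduces to the scalar identity
\begin{displaymath}
\frac{1}{\lambda} = \frac{w_{1}}{(1-t)\lambda + t} + \frac{w_{2}}{(1-t)\lambda + tc}.
\end{displaymath}
Clearing denominators and writing $u = (1-t)\lambda$, this becomes $u^{2} + u t (1+c) + t^{2} c = \lambda\big(u + t(w_{1}c + w_{2})\big)$; substituting back $u = (1-t)\lambda$ and dividing by $t$, the $\lambda^{2}$ coefficients contribute $-(1-t)$, the constant term is $tc$, and the two coefficients of $\lambda$ are $(1-t)(1+c)$ against $w_{1}c + w_{2}$, which agree precisely because $c\,w_{1} + w_{2} = (1-t)(1+c)$. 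Hence the quadratic collapses to $(1-t)\lambda^{2} = tc$, whose unique positive root is $\displaystyle \lambda = \sqrt{\frac{tc}{1-t}} = \sqrt{\frac{t(t - w_{1})}{(1-t)(w_{2}-t)}}$. Thus $X = \lambda A \in \mathbb{P}_{m}$ solves \eqref{E:fixed}, and by uniqueness $G_{t}(\omega; A, B) = \lambda A$, which is the asserted formula.

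There is no real obstacle here: the argument is a guided verification. The only points deserving a line of care are that $c > 0$ and $0 < t < 1$ in both sign regimes (so that $\lambda$ is a genuine positive real and $\lambda A$ is positive definite), and that the scalar reduction is legitimate because all matrices involved are scalar multiples of $A$ and hence commute with $A^{-1}$. One need not worry about the spurious negative root of $(1-t)\lambda^{2} = tc$: we only require the existence of one positive definite solution, and Proposition \ref{P:fixed} supplies uniqueness.
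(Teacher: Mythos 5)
Your proof is correct, but it takes a different route from the paper's. You observe that the hypothesis forces $B = cA$ with $c = \frac{t-w_1}{w_2-t} > 0$, posit the ansatz $X = \lambda A$, reduce the fixed-point equation \eqref{E:fixed} to a scalar quadratic in $\lambda$, show the linear term cancels thanks to the identity $cw_1 + w_2 = (1-t)(1+c)$, and conclude $\lambda = \sqrt{tc/(1-t)}$; uniqueness from Proposition \ref{P:fixed} then identifies $\lambda A$ with $G_t(\omega;A,B)$. The paper instead works directly with the unknown solution $X$ without any ansatz: it multiplies the equation on the left by $(1-t)X+tA$ and on the right by $(1-t)X+tB$, uses the hypothesis to reduce the result to $(1-t)X = tAX^{-1}B$, i.e.\ $XA^{-1}X = \frac{t}{1-t}B$, and then invokes the Riccati lemma to write $X = A \# \left(\frac{t}{1-t}B\right)$ before simplifying via $B = cA$. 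Your argument is more elementary (no Riccati lemma, only scalar algebra plus uniqueness), while the paper's derivation exposes the structural identity $X = A \# \left(\frac{t}{1-t}B\right)$ and explains why the metric geometric mean appears. Your computations check out, including the positivity of $c$ and the fact that $0 < t < 1$ in both sign regimes, so the appeal to Proposition \ref{P:fixed} is legitimate.
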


\begin{proof}
Let $X = G_{t}(\omega; A, B)$. We need to solve the equation
\begin{displaymath}
X^{-1} = w_{1} ((1-t) X + t A)^{-1} + w_{2} ((1-t) X + t B)^{-1}.
\end{displaymath}
It is equivalent to
\begin{displaymath}
((1-t) X + t A) X^{-1} ((1-t) X + t B) = w_{1} ((1-t) X + t B) + w_{2} ((1-t) X + t A).
\end{displaymath}
By a simple calculation and by assumption, we obtain
\begin{displaymath}
(1-t) X - t A X^{-1} B = (1-t) (A + B) - (w_{1} B + w_{2} A) = (w_{1} - t) A + (w_{2} - t) B = 0.
\end{displaymath}
Thus, $(1-t) X = t A X^{-1} B$, which is the same as $\displaystyle X A^{-1} X = \frac{t}{1-t} B$. By the Riccati lemma and the joint homogeneity of metric geometric mean,
\begin{displaymath}
X = A \# \left( \frac{t}{1-t} B \right) = \sqrt{\frac{t(t - w_{1})}{(1-t)(w_{2} - t)}} A \# A = \sqrt{\frac{t(t - w_{1})}{(1-t)(w_{2} - t)}} A.
\end{displaymath}
\end{proof}

\begin{remark}
Let $0 \leq s \leq 1/2 \leq t \leq 1$. By Theorem \ref{T:monotone} and \eqref{E:Sra}
\begin{equation} \label{E:s-1/2-t}
G_{s}(1/2, 1/2; A, B) \geq G_{1/2}(1/2, 1/2; A, B) = A \# B \geq G_{t}(1/2, 1/2; A, B).
\end{equation}
From this inequalities we give some open questions.
\begin{itemize}
\item[(1)] Let
\begin{displaymath}
A =
\left[
  \begin{array}{cc}
    2 & -1 \\
    -1 & 2 \\
  \end{array}
\right], \
B =
\left[
  \begin{array}{cc}
    3 & -2 \\
    -2 & 3 \\
  \end{array}
\right], \
C =
\left[
  \begin{array}{cc}
    2 & 1 \\
    1 & 2 \\
  \end{array}
\right]
\end{displaymath}
and $\omega = (1/3, 1/3, 1/3)$. Then
\begin{displaymath}
\begin{split}
G_{1/2}(\omega; A, B, C) & \approx
\left[
  \begin{array}{cc}
    1.96124391 & -0.53074303 \\
    -0.53074303 & 1.96124391 \\
  \end{array}
\right], \\
\Lambda(\omega; A, B, C) = (A B C)^{1/3} & \approx
\left[
  \begin{array}{cc}
    1.95423082 & -0.51198125 \\
    -0.51198125 & 1.95423082 \\
  \end{array}
\right]
\end{split}
\end{displaymath}
since $A, B, C$ commute. So $G_{1/2}(\omega; A, B, C) \neq \Lambda(\omega; A, B, C)$, in general.
On the other hand, it would be interesting to find any relationship between $G_{t}(\omega; \mathbb{A})$ and $\Lambda(\omega; \mathbb{A})$.
One can naturally ask from \eqref{E:s-1/2-t} whether the following hold:
\begin{displaymath}
\begin{split}
G_{t}(\omega; \mathbb{A}) \geq \Lambda(\omega; \mathbb{A}) \quad \textrm{if} \quad t \leq 1/2, \\
G_{t}(\omega; \mathbb{A}) \leq \Lambda(\omega; \mathbb{A}) \quad \textrm{if} \quad t \geq 1/2.
\end{split}
\end{displaymath}

\item[(2)] Since the metric geometric mean is log-majorized by the log-Euclidean mean \cite{AH}:
\begin{displaymath}
A \#_{s} B \prec_{\log} \exp \left( (1-s) \log A + s \log B \right), \quad s \in [0,1],
\end{displaymath}
one can ask from Remark \ref{R:log-Euclidean} with \eqref{E:s-1/2-t} that $G_{t}(1/2, 1/2; A^{p}, B^{p})^{1/p}$ for $t \in [1/2, 1]$ converges to the log-Euclidean mean of $A, B$ as $p \to 0^+$ in terms of the weak log-majorization. That is,
\begin{displaymath}
G_{t}(1/2, 1/2; A^{p}, B^{p})^{1/p} \nearrow_{\prec_{w \log}} \exp \left( \frac{\log A + \log B}{2} \right).
\end{displaymath}
\end{itemize}
\end{remark}

\vspace{4mm}

\textbf{Acknowledgements} \\

- \textbf{Funding}: This work was supported by the National Research Foundation of Korea (NRF) grant funded by the Korea government (MSIT) (No. NRF-2022R1A2C4001306). \\

- \textbf{Declarations}: The authors contributed equally to this paper and have no conflict of interest. \\

- \textbf{Data availability}: Not applicable.

\end{document}